\documentclass[12pt,reqno]{amsart}
\usepackage{graphicx} 
\usepackage{fullpage}

\usepackage{etoolbox}

\usepackage[utf8]{inputenc}
\usepackage{amsmath}
\usepackage{amsfonts}
\usepackage{amsthm}
\usepackage{multicol}
\usepackage{pictexwd, dcpic}
\usepackage{fancyhdr}
\usepackage{mathrsfs}
\usepackage{latexsym,mathtools}
\usepackage{bm, enumerate}
\usepackage[shortlabels]{enumitem}
\usepackage{amssymb}
\usepackage[all,cmtip]{xy}
\usepackage[x11names]{xcolor}
\usepackage[linkcolor=Blue2,citecolor=red,colorlinks]{hyperref}
\usepackage[capitalize,nameinlink]{cleveref}

\usepackage{comment}

\newtheorem{thm}{Theorem}[section]

\newtheorem{lemma}[thm]{Lemma}
\newtheorem{prop}[thm]{Proposition}

\newtheorem{defn}[thm]{Definition}

\newtheorem{theorem}{Theorem}

\renewcommand{\mod}[1]{\left|#1\right|}

\newcommand{\bz}{\boldsymbol{z}}
\newcommand{\bw}{\boldsymbol{w}}
\newcommand{\bv}{\boldsymbol{v}}
\newcommand{\by}{\boldsymbol{y}}
\newcommand{\bx}{\boldsymbol{x}}

\newcommand{\bt}{\boldsymbol{t}}

\newcounter{egcounter}
\newenvironment{example}{\addtocounter{egcounter}{1}\noindent\textbf{Example \arabic{egcounter} }}{}

\newcommand{\cH}{{\mathcal H}}

\newcommand*{\mf}{\mathbf}

\newcommand\blfootnote[1]{%
  \begingroup
  \renewcommand\thefootnote{}%
  \footnote{#1}%
  \addtocounter{footnote}{-1}%
  \endgroup
}

\begin{document}
	\title[Multiplicative linear functionals on RKHS]{Multiplicative Linear Functionals on Reproducing Kernel Hilbert Spaces}

\author[T. Bhattacharyya]{Tirthankar Bhattacharyya$^1$}\blfootnote{$^1$Department of Mathematics, Indian Institute of Science, Bangalore 560012, India. Email: tirtha@iisc.ac.in}

\author[Jaikishan]{Jaikishan$^2$}\blfootnote{$^2$Department Of Mathematics, SNIoE (deemed to be university), School of Natural Sciences, Gautam Budh Nagar, Uttar Pradesh, 203207, India. Email: jk301@snu.edu.in}

\author[P. Kumar]{Poornendu Kumar$^3$}\blfootnote{$^3$Faculty of Mathematics and Physics, University of Ljubljana, Jadranska ulica 19, 1000 Ljubljana, Slovenia. Email: poornendu.kumar@fmf.uni-lj.si, poornendukumar@gmail.com}

\author[C. K. Sahu]{Chaman Kumar Sahu$^4$}\blfootnote{$^4$Department of Mathematics, Indian Institute of Technology Bombay, Powai, Mumbai, 400076, India. Email: sahuchaman9@gmail.com, chamanks@math.iitb.ac.in}

	\thanks{2020 {\em MSC}: 46E22, 47B32, 46H99, 30B50.\\
	{\em Keywords and phrases}: Complete Pick kernel, Composition operator, Diagonal Kernel, Dirichlet series, Multiplicative linear functional, Reproducing kernel Hilbert space.
}

\maketitle

\begin{abstract}
The classical Gleason–Kahane–\.Zelazko theorem characterizes multiplicative linear functionals on a unital Banach algebra through the scalar identity $\Lambda(x^{2})=\Lambda(x)^{2}$. We develop analogues of this theorem for bounded linear functionals on reproducing kernel Hilbert spaces of holomorphic functions on domains in $\mathbb{C}^{d}$, replacing conditions on the whole space by tractable conditions involving only kernel functions.

Our first main result shows that if $k$ is a diagonal holomorphic kernel on a domain $\Omega\subseteq\mathbb{C}^{d}$ containing the origin, and if $k_{\bw}^{2}\in\mathcal H(k)$ for every $\bw\in\Omega$, then a bounded linear functional $\Lambda$ on $\mathcal H(k)$ satisfying $\Lambda(1)=1$ is multiplicative if and only if $\Lambda(k_{\bw}^{2})=\Lambda(k_{\bw})^{2}$ for all $\bw\in\Omega$. Kernels satisfying $2$-point Pick property and their powers furnish a natural class of examples.

When $k$ arises as a Schur product or a tensor product of complete diagonal Pick kernels, we obtain a further, more algebraic characterization of multiplicativity, expressed in terms of the values of $\Lambda$ on kernel functions and their reciprocals. This framework subsumes the weighted Bergman and Dirichlet-type spaces, as well as the Hardy space on the polydisc.

We extend the analysis to Hilbert spaces associated with diagonal Dirichlet series kernels on half-planes, encompassing in particular the Hardy space of Dirichlet series and its Riemann zeta reproducing kernel. Explicit examples demonstrate that the boundedness hypothesis on $\Lambda$ cannot be omitted. 

Finally, our characterization of multiplicative linear functionals leads to characterizations of weighted composition operators on a reproducing kernel Hilbert space associated with a diagonal holomorphic kernel. 
\end{abstract}

{\footnotesize \noindent \textsf{Funding}: This work is supported by J C Bose Fellowship number JCB/2021/000041 of ANRF, the  DST FIST program 2021 (TPN-700661),  the Slovenian Research Agency (program P1-0222 and grant J1-50002) and the COMPUTE project under the QuantERA II Programme, funded by the European Union’s Horizon 2020 programme (Grant Agreement No. 101017733) and the Institute Postdoctoral Fellowship of the Indian Institute of Technology Bombay.}

\newpage

\section{When is a bounded linear functional multiplicative?}
\subsection{Motivation:} 
Multiplicative linear functionals have long played an important role in the study of commutative Banach algebras. In an early contribution, Hewitt and Kakutani investigated in \cite{Hewitt-Kakutani-old, Hewitt-Kakutani}, multiplicative linear functionals on the measure algebra of a locally compact abelian group, constructing nonclassical examples and demonstrating the richness of its maximal ideal space. In particular, their main construction yields multiplicative linear functionals lying in the closure of the character group.

A fundamental general characterization of multiplicative linear functionals on Banach algebras is provided by the celebrated Gleason--Kahane--\.Zelazko theorem.

\noindent	\textbf{Theorem GK\.Z.}  (Gleason, Kahane and \.Zelazko, see \cite{Gleason}, \cite{KZ} and \cite{Zelazko}.) 
{\em Let $\mathcal A$ be a complex unital Banach algebra with identity $e$, and let $\Lambda:\mathcal A\to\mathbb C$ be a linear functional satisfying $\Lambda(e)=1$. Then TFAE: 
\begin{enumerate}
     \item[(a)]$\Lambda$ is multiplicative;
    \item [(b)]$\Lambda(x^2)=\Lambda(x)^2$ for all $x\in\mathcal A$; 
    \item [(c)]$\Lambda(x)\neq 0$ for all invertible element $x\in\mathcal A$. 
\end{enumerate}}

Numerous variants and extensions of this theorem have been established in different settings; see, for example \cite{CHMR, jaikishan2024multiplicativity, mashreghi2015gleason, mashreghiDirichlet,  roitman1981linear, sampat2021cyclicity}. We shall be concerned with (bounded) linear functionals on Hilbert function spaces.
\begin{defn}
  A linear functional $\Lambda$ on a reproducing kernel Hilbert space (RKHS) $\mathcal H(k)$ on a domain $\Omega$ is called multiplicative if $\Lambda(fg) = \Lambda(f)\Lambda(g)$ whenever $f$ and $g$ are such functions in $\mathcal H(k)$ that the pointwise product $fg$ is also in $\mathcal H(k)$.
\end{defn}

This is related to algebraic consistency, see \cite{CM} and \cite{Hartz-Can}. Results of GK\.Z type on Hardy spaces of the unit disc was harvested in \cite{mashreghi2015gleason} by the ingenious methodology of proving a GK\.Z theorem for a left module over a Banach algebra and then viewing the Hardy space as a left module over its multiplier algebra. 

{\em The multiplier algebra $M(\mathcal X)$ of a Banach space $\mathcal X$ consists of those holomorphic functions $\varphi$ on $\Omega$ which have the property that $f \mapsto \varphi f$ is a bounded operator on $\mathcal X$ with the norm
$$ \| \varphi \|_{M(\mathcal X)} = \sup \{ \|  \varphi f \|_{\mathcal X} : \| f \|_{\mathcal X} \le 1\}.$$ }
We shall be concerned with a Hilbert function space $\mathcal H$. $M(\mathcal H)$ is a closed subalgebra of the algebra of all bounded linear operators on $\mathcal H$, so it is a Banach algebra. For each $\bw \in \Omega$,
the evaluation functional $\varphi \rightarrow \varphi(\bw)$ is a character on $M(\mathcal H)$, so $|\varphi(\bw)| \le \|\varphi \|_{M(\mathcal H)}$. It follows
that $M(\mathcal H) \subseteq  H^\infty(\Omega)$. If $\mathcal H$ contains the constants,  $M(\mathcal H) \subseteq  \mathcal H$. 

The landmark module theorem was used again in \cite{CHMR}. This time the authors proved a GK\.Z theorem for complete Pick spaces. It is important to identify a special class of functions in the Hilbert space (like the invertible elements in a Banach algebra). The non-vanishing property of the linear functional on this class guarantees multiplicativity. In the instance of \cite{mashreghi2015gleason}, the special class consisted of outer functions. In \cite{CHMR}, it was the class of cyclic functions. The challenge with this result is that in many spaces there is no characterization of cyclic functions. Our motivating example is the Dirichlet space where this is a long-standing open problem, see \cite{BS}. Thus one looks for an alternative class of functions.  We do need more than just the non-vanishing property of the {\em bounded} linear functional though. Indeed, suppose for an element $\boldsymbol w$ of $\Omega $,  the notation $k_{\boldsymbol w}$ denotes the {\em kernel function} whose value at $\boldsymbol{z} $ is $k(\boldsymbol{z},\boldsymbol{w})$. Let $\Lambda$ be the bounded linear functional on the Hardy space $H^2(\mathbb D)$ given by $$\Lambda(f)=f(0)+f'(0).$$ Since $\Lambda(k_w)=1+\overline{w}\neq0$ for every $w\in\mathbb D$, $\Lambda$ does not vanish on any reproducing kernel. However, $\Lambda$ is not multiplicative.

 Kernel functions are cyclic in the Hardy space, and more generally in several important
 reproducing kernel Hilbert spaces, including complete Pick spaces; see, for example, \cite{CHMR}.  The preceding example shows, however, that nonvanishing on kernel functions alone is too weak to characterize multiplicativity. This suggests that, in seeking an analogue of the GK\.Z theorem for reproducing kernel Hilbert spaces, one should replace the family of all elements of a Banach algebra by the much smaller family of reproducing kernels, while retaining stronger algebraic identities. Accordingly, we investigate kernel-function analogues of two equivalent formulations of the
 GK\.Z theorem. The first is the square-preserving condition
 $
 \Lambda(k_{\bw}^2)=\Lambda(k_{\bw})^2,$ for $\bw\in\Omega,$
 which may be viewed as a kernel-level counterpart of $
 \Lambda(x^2)=\Lambda(x)^2.$  For the nonvanishing formulation in part (c), it is useful to observe that, for a normalized multiplicative functional on a unital Banach algebra, $
 \Lambda(x^{-1})=\frac{1}{\Lambda(x)}$ 
 for every invertible element $x$. This motivates the reciprocal kernel condition
 \[
 \Lambda\!\left(\frac{1}{k_{\bw}}\right)
 =
 \frac{1}{\Lambda(k_{\bw})}, \quad \bw\in\Omega
 \] 
 whenever $1/k_{\bw}$ belongs to the space. Our aim is to determine classes of reproducing kernel Hilbert spaces for which these
 identities, imposed only on reproducing kernels, already force \(\Lambda\) to be
 multiplicative. Thus, rather than testing \(\Lambda\) on all elements or all invertible
 elements, as in the classical GK\.Z theorem, we seek to detect
 multiplicativity from the distinguished family of kernel functions alone.

\subsection{Main Results} 
Let $\mathbb{Z}_+$ denote the set of all non-negative integers. For
$\alpha=(\alpha_1,\ldots,\alpha_d)\in\mathbb{Z}_+^d$ and
$\bz=(z_1,\ldots,z_d)\in\mathbb{C}^d$, we use the standard multi-index notation
$$
|\alpha|=\alpha_1+\cdots+\alpha_d,\qquad
\alpha!=\alpha_1!\cdots\alpha_d!,\qquad
\bz^\alpha=z_1^{\alpha_1}\cdots z_d^{\alpha_d}.
$$
For $\alpha\in\mathbb{Z}_+^d$, let
$$
\binom{|\alpha|}{\alpha}
=\frac{|\alpha|!}{\alpha_1!\cdots\alpha_d!},
 \quad \text{and define} \quad 
a_\alpha=
\begin{cases}
	\dfrac{a_{|\alpha|}}{\binom{|\alpha|}{\alpha}}, & \alpha\in\mathbb{Z}_+^d,\\[1ex]
	0, & \text{otherwise}.
\end{cases}
$$
For $\alpha,\gamma\in\mathbb Z_+^d$, we write $\gamma\leq\alpha$ if
$\gamma_j\leq\alpha_j$ for every $1\leq j\leq d$. Given $\bz,\bw\in\mathbb{C}^d$, we write
$$
\overline{\bw}=(\overline{w}_1, \ldots, \overline{w}_d) \quad \text{and} \quad \bz\odot\bw=(z_1w_1,\ldots,z_dw_d)
$$
for their coordinatewise product.  Let $\Omega\subseteq\mathbb C^d$ be a domain containing the origin. A reproducing kernel $k$ on $\Omega$ is called a \emph{diagonal holomorphic kernel} if it is holomorphic in the first variable, anti-holomorphic in the second variable, and there exists a family of positive numbers $\{a_\alpha\}_{\alpha\in\mathbb Z_+^d}$ such that the power series expansion of $k$ at $\textbf{0}$ is given by
	$$
	k(\bz,\bw)
	=
	\sum_{\alpha\in\mathbb Z_+^d}
	a_\alpha
	(\bz\odot\overline{\bw})^\alpha,
	\qquad
	\bz,\bw\in\Omega.
	$$
We call a diagonal holomorphic kernel \emph{normalized} if $a_{\mathbf 0}=1$. All unitarily invariant kernels on the Euclidean unit ball $\mathbb{B}_d$, as well as the classical Hardy, weighted Bergman, and Dirichlet-type kernels on the polydisc, are prime examples of this class. For a detailed discussion of these kernels, we refer the reader to \cite[Section~2.3]{MT}.
\begin{defn}
	A reproducing kernel Hilbert space $\mathcal H(k)$ is said to be irreducible if the reproducing kernel $k(\bz, \bw)$ is nonzero for all $\bz, \bw$ in $\Omega$, and the kernel functions $k_{\bz}$ and $k_{\bw}$ are linearly independent whenever $\bz \neq \bw$.
\end{defn}
Our first result characterizes the bounded multiplicative linear functionals on $\cH(k)$ associated with a normalized diagonal holomorphic kernel $k$. The boundedness assumption is essential, as demonstrated in \cref{examples:MainTheorems}.
\begin{theorem}\label{theorem_Main_1}
	Let $\Omega\subseteq\mathbb C^d$ be a domain containing $\mathbf 0$, and let
	$k$ be a diagonal holomorphic kernel on $\Omega$. Assume that $k_{\bw}^2$, the function whose value at a point $\bz$ is $k(\bz , \bw)^2$ is in $\mathcal H(k)$ for every $\bw\in\Omega.$ Suppose that $\Lambda:\mathcal H(k)\to\mathbb C$ is a bounded linear functional satisfying $\Lambda(1)=1$. Then the following statements are
	equivalent:
	\begin{enumerate}
		\item
		$\Lambda$ is multiplicative;
		\item
		$
		\Lambda(k_{\bw}^2)
		=
		\Lambda(k_{\bw})^2
		$ for every $\bw\in\Omega$.
	\end{enumerate}
\end{theorem}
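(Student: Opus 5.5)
For (1)$\Rightarrow$(2) there is nothing to do beyond noting that $k_{\bw}\in\mathcal H(k)$ and, by hypothesis, $k_{\bw}\cdot k_{\bw}=k_{\bw}^2\in\mathcal H(k)$. Multiplicativity then gives $\Lambda(k_{\bw}^2)=\Lambda(k_{\bw})^2$. The work is in (2)$\Rightarrow$(1). The plan is to turn the kernel identity into a recursion on the moments $c_\alpha:=\Lambda(\bz^\alpha)$, show that $c_\alpha=\lambda^\alpha$ for a point $\lambda\in\mathbb C^d$, and then deduce multiplicativity from a Cauchy product argument.

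\emph{Step 1 (structure of $\mathcal H(k)$).} Since $k$ is diagonal with positive coefficients, the monomials $\bz^\alpha$ form an orthogonal basis of $\mathcal H(k)$ with $\|\bz^\alpha\|^2=1/a_\alpha$. Hence every $f\in\mathcal H(k)$ equals its Taylor series $\sum_\alpha \hat f_\alpha \bz^\alpha$ at $\mathbf 0$, the series converges in norm, and $\|f\|^2=\sum_\alpha|\hat f_\alpha|^2/a_\alpha$. In particular $k_{\bw}=\sum_\alpha a_\alpha\overline{\bw}^\alpha\bz^\alpha$ in $\mathcal H(k)$. Since $k_{\bw}^2\in\mathcal H(k)$ and its Taylor coefficients are $b_\alpha\overline{\bw}^\alpha$ with $b_\alpha=\sum_{\beta+\gamma=\alpha}a_\beta a_\gamma$, we also get $k_{\bw}^2=\sum_\alpha b_\alpha\overline{\bw}^\alpha\bz^\alpha$ in norm. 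Boundedness of $\Lambda$ then gives
\[
\Lambda(k_{\bw})=\sum_\alpha a_\alpha c_\alpha\overline{\bw}^\alpha,\qquad \Lambda(k_{\bw}^2)=\sum_\alpha b_\alpha c_\alpha\overline{\bw}^\alpha ,
\]
both valid for all $\bw$ in a neighbourhood of $\mathbf 0$, where the first series converges absolutely.

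\emph{Step 2 (moment recursion).} Square the first series as a Cauchy product and compare coefficients of $\overline{\bw}^\alpha$ with the second series, using uniqueness of power series in $\overline{\bw}$. This gives
\[
c_\alpha\sum_{\beta+\gamma=\alpha}a_\beta a_\gamma=\sum_{\beta+\gamma=\alpha}a_\beta a_\gamma c_\beta c_\gamma .
\]
Because $c_{\mathbf 0}=\Lambda(1)=1$, the two terms with $\beta=\mathbf 0$ or $\gamma=\mathbf 0$ contribute $2a_{\mathbf 0}a_\alpha c_\alpha$ to both sides and cancel. What remains is
\[
c_\alpha\sum_{\beta+\gamma=\alpha,\ \beta,\gamma\ne\mathbf 0}a_\beta a_\gamma=\sum_{\beta+\gamma=\alpha,\ \beta,\gamma\neq\mathbf 0}a_\beta a_\gamma c_\beta c_\gamma .
\]
Put $\lambda_j=c_{e_j}$. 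Induct on $|\alpha|\ge 2$. The inner sum on the left is a nonempty sum of positive numbers, and by the induction hypothesis $c_\beta c_\gamma=\lambda^\beta\lambda^\gamma=\lambda^\alpha$ on the right. Dividing, $c_\alpha=\lambda^\alpha$ for all $\alpha$.

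\emph{Step 3 (multiplicativity).} Write $\Lambda=\langle\cdot,h\rangle$. Then $\hat h_\alpha/a_\alpha=\overline{\Lambda(\bz^\alpha)}=\overline{\lambda}^\alpha$, so $\sum_\alpha a_\alpha|\lambda^\alpha|^2=\|h\|^2<\infty$. By Cauchy--Schwarz,
\[
\sum_\alpha|\hat f_\alpha\lambda^\alpha|\le\|f\|\,\|h\|
\]
for every $f\in\mathcal H(k)$, and $\Lambda(f)=\sum_\alpha\hat f_\alpha\lambda^\alpha$ with absolute convergence. Now let $f,g,fg\in\mathcal H(k)$. The Taylor coefficients of $fg$ at $\mathbf 0$ are the Cauchy convolutions $\sum_{\beta+\gamma=\alpha}\hat f_\beta\hat g_\gamma$. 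Since the series for $\Lambda(f)$ and $\Lambda(g)$ converge absolutely, their Cauchy product may be rearranged, and this gives $\Lambda(fg)=\Lambda(f)\Lambda(g)$.

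The main obstacle is the analytic bookkeeping rather than the algebra. One has to justify that the Taylor expansions of $k_{\bw}$ and $k_{\bw}^2$ converge in $\mathcal H(k)$, which is where orthogonality of monomials and the hypothesis $k_{\bw}^2\in\mathcal H(k)$ enter. One also has to secure the absolute convergence in Step 3, where boundedness of $\Lambda$ is used essentially to force $\lambda$ into the region $\sum_\alpha a_\alpha|\lambda^\alpha|^2<\infty$.
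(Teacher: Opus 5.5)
Your proposal is correct and follows the paper's proof essentially step for step. Both arguments expand $k_{\bw}$ and $k_{\bw}^2$ in monomials, compare coefficients of $\overline{\bw}^{\alpha}$, cancel the endpoint terms using $\Lambda(1)=1$, induct on $|\alpha|$ to get $\Lambda(\bz^\alpha)=\lambda^\alpha$, and finish with a Cauchy-product argument. Your Riesz-representer and Cauchy--Schwarz estimate $\sum_\alpha|\hat f_\alpha\lambda^\alpha|\le\|f\|\,\|h\|$ adds something useful: it supplies the absolute convergence that the paper's final rearrangement of the double series uses but justifies only by ``continuity of $\Lambda$''.
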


A reproducing kernel $k$ on a set $\Omega$ is said to satisfy the {\em $N$-point Pick property} if, for every choice of distinct points
$\bz_1,\ldots,\bz_N\in\Omega$ and every choice of target values
$w_1,\ldots,w_N\in\mathbb D$, there exists a multiplier
$\varphi\in M(\mathcal H(k))$ with $
\|\varphi\|_{M(\mathcal H(k))}\le 1,$
such that $\varphi(\bz_j)=w_j,$ for $j=1,\ldots,N,$ if and only if the Pick matrix
\[
\left[\bigl(1-w_i\overline{w_j}\bigr)k(\bz_i,\bz_j)\right]_{i,j=1}^{N}\succeq 0.
\]
For a positive integer $p$, we shall denote by $k_{\bw}^{p}$ the function whose value at $\bz$ is $\bigl(k(\bz,\bw)\bigr)^p$. Note that if $k^{\circ p}$ denote the $p$-fold Schur power of $k$, then $k_{\bw}^{\circ p}(\bz) = k_{\bw}^{p}(\bz)$ as functions. Suppose that $k$ is normalized and satisfies the $2$-point Pick property. By Proposition~3.1 of \cite{Hartz-Can}, for every point $\bw$ in the domain there exists a strictly contractive multiplier $\psi_{\bw}$ such that $
\psi_{\bw}
=
1-\frac{1}{k_{\bw}}.$ Consequently,
\[
k_{\bw}
=
\frac{1}{1-\psi_{\bw}}
=
\sum_{n=0}^{\infty}\psi_{\bw}^{\,n},
\]
where the series converges absolutely in the multiplier algebra. It follows that $k_{\bw}\in M(\mathcal H(k)),$ and hence the hypothesis  of \cref{theorem_Main_1} is satisfied for such a $k$ and its powers. 

\begin{defn}\label{Pick}
		A kernel $k$ on a set $\Omega$ is called a complete Pick kernel if, for any numbers $N$ and $m$, any points $\bz_1, \bz_2, \ldots , \bz_N$ in $\Omega$ and any contractive $m \times m$ matrices $W_1, W_2,  \ldots , W_N$, the positivity of the matrix valued kernel $[(I - W_i^*W_j) k (\bz_i , \bz_j)]_{i,j=1}^N$ implies the existence of a contractive multiplier $\Phi$ on $\mathcal H(k) \otimes \mathbb C^m$ such that $\Phi(\bz_i) = W_i$ for all $i=1, \ldots, N$. 
\end{defn}

	Recall that complete Pick kernels (and the associated complete Pick spaces) arose from interpolation problems; see~\cite{AM_JFA}. Prototype examples of complete Pick spaces include the Hardy space $H^2(\mathbb{D})$ over the unit disc, the Drury--Arveson space over the unit ball and the Dirichlet space on the unit disc. The theory of complete Pick kernels is well developed, and we do not review it here.  

When the kernels possess additional structure, one obtains a further characterization that is more algebraic in nature. If $s_1,\ldots,s_p$ are reproducing kernels on $\Omega$, then
$s_1\circ\cdots\circ s_p$ denotes their Schur product, while
$s_1\otimes\cdots\otimes s_p$ denotes their tensor product kernel on
$\Omega\times\cdots\times\Omega$; see
\cite{Paulesn-Raghupathi} for details. If we write $k =s_1\otimes\cdots\otimes s_p$, then for $\bw = (\bw_1, \ldots, \bw_p) \in \Omega \times \cdots \times \Omega$, $k_{\bw}(\bz_1, \ldots ,\bz_p) = s_{1_{\bw_1}}(\bz_1) \cdots s_{p_{\bw_p}}(\bz_p)$ for all $(\bz_1, \ldots, \bz_p) \in \Omega \times \cdots \times  \Omega$. Our second main result is the following algebraic characterization.

\begin{theorem}\label{completePick}
	Let $\Omega$ be a domain in $\mathbb C^d$ containing $\mathbf 0$. Let $s_1,\ldots,s_p$ be  normalized, irreducible, diagonal complete  Pick kernels on $\Omega$. In the following, $\Lambda$ is  a bounded linear functional on $\mathcal H(k)$  satisfying $\Lambda(1)=1$.
		\begin{enumerate}
		\item If $k=s_1\circ\cdots\circ s_p$, then $s_{j_{\bw}}^{-1}\in M(\mathcal H(k)),$ for $j=1,\ldots,p,\ \bw\in\Omega$ and $\Lambda$ is multiplicative if and only if
		\[
		\frac{1}{\Lambda(k_{\bw})}
		=
		\prod_{j=1}^{p}
		\Lambda\!\left(\frac{1}{s_{j_{\bw}}}\right),
		\qquad
		\bw\in\Omega.
		\]
		
		\item If $k=s_1\otimes\cdots\otimes s_p$, then $s_{j_{\bw}}^{-1}\in M(\mathcal H(k)),$ for $j=1,\ldots,p,\ \bw\in\Omega$ and  $\Lambda$  is multiplicative if and only if
		\[
		\frac{1}{
			\Lambda(
			k_{\bw})}=\prod_{j=1}^{p}
			\Lambda\!\Big(\frac{1}{s_{j_{\bw_j}}}\Big), 
		\qquad
		\bw_1,\ldots,\bw_p\in\Omega.
		\]
	\end{enumerate}
\end{theorem}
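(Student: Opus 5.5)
We prove part (1) in three steps and then adapt the argument to part (2).

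\textbf{Step 1: the reciprocals $1/s_{j_{\bw}}$ are multipliers.} Since each $s_j$ is a normalized irreducible complete Pick kernel, the McCullough--Quiggin/Agler--McCarthy characterization gives
\[
1-\frac{1}{s_j(\bz,\bw)}=\langle b_j(\bz),b_j(\bw)\rangle,
\]
with $b_j(\bz)$ a contraction into an auxiliary Hilbert space and $\|b_j(\bw)\|<1$. Hence $\psi_{j,\bw}:=1-1/s_{j_{\bw}}$ is a strictly contractive multiplier of $\mathcal H(s_j)$. This is the same fact quoted from \cite[Prop.~3.1]{Hartz-Can}. It follows that $1/s_{j_{\bw}}=1-\psi_{j,\bw}$ is a multiplier of $\mathcal H(s_j)$.

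We then transfer this to $\mathcal H(k)$. If $k=s_1\circ\cdots\circ s_p$, then $\mathcal H(k)$ is the restriction of $\mathcal H(s_1)\otimes\cdots\otimes\mathcal H(s_p)$ to the diagonal. A multiplier $\varphi$ of $\mathcal H(s_j)$ satisfies $(\|\varphi\|^2-\varphi(\bz)\overline{\varphi(\bw)})s_j\succeq0$. Schur multiplying by the remaining factors gives $(\|\varphi\|^2-\varphi\overline{\varphi})k\succeq0$, so $M(\mathcal H(s_j))\subseteq M(\mathcal H(k))$ contractively. The tensor case is analogous, with $\varphi$ viewed as a function of the $j$-th block of variables.

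\textbf{Step 2: necessity.} Suppose $\Lambda$ is multiplicative. Since the $s_{j_{\bw}}$ are multipliers, $s_{j_{\bw}}\in\mathcal H(k)$ (the constant $1$ lies in $\mathcal H(k)$). Moreover $k_{\bw}=\prod_j s_{j_{\bw}}$, and every partial product also lies in $\mathcal H(k)$. Multiplicativity gives
\[
1=\Lambda(1)=\Lambda\bigl(k_{\bw}\cdot \textstyle\prod_j s_{j_{\bw}}^{-1}\bigr)=\Lambda(k_{\bw})\prod_j\Lambda(s_{j_{\bw}}^{-1}),
\]
where we peel off one factor at a time. Each intermediate product is a multiplier times an element of $\mathcal H(k)$, so it stays in the space.

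\textbf{Step 3: sufficiency (the main obstacle).} The plan is to reduce to \cref{theorem_Main_1}. We would show $\Lambda(f)=f(\boldsymbol\lambda)$ for some point $\boldsymbol\lambda$, or more generally that $\Lambda$ is a point evaluation in the closure sense, and deduce multiplicativity from that.

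First write $\Lambda(f)=\langle f,g\rangle$ with $g\in\mathcal H(k)$, so $\Lambda(k_{\bw})=\overline{g(\bw)}$. Expand $1/s_{j_{\bw}}=1-\sum_{n\ge1}c^{(j)}_n(\bz\odot\overline{\bw})^{n}$ in diagonal form; the Pick property makes the coefficients $c^{(j)}_\alpha\ge0$. Then $\Lambda(1/s_{j_{\bw}})=1-\sum_\alpha c^{(j)}_\alpha \overline{\bw}^\alpha\Lambda(\bz^\alpha)$ is an antiholomorphic function of $\bw$. The hypothesis becomes an identity of antiholomorphic power series in $\overline{\bw}$:
\[
\overline{g(\bw)}\prod_j\Bigl(1-\sum_\alpha c^{(j)}_\alpha\overline{\bw}^\alpha\Lambda(\bz^\alpha)\Bigr)=1.
\]
Also $g(\bw)=\sum_\alpha a_\alpha \overline{\Lambda(\bz^\alpha)}\bw^\alpha$.

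Put $\mu_\alpha=\Lambda(\bz^\alpha)$. We would then compare coefficients by induction on $|\alpha|$, proving $\mu_\alpha=\prod_i\mu_{e_i}^{\alpha_i}$. This is the main difficulty. The coefficient of $\overline{\bw}^\alpha$ relates $a_\alpha\mu_\alpha$ linearly to the lower-order $\mu_\gamma$. By the induction hypothesis the lower terms coincide with those in the known identity $k(\boldsymbol\lambda,\bw)\prod_j s_j(\boldsymbol\lambda,\bw)^{-1}=1$, where $\boldsymbol\lambda=(\mu_{e_1},\dots,\mu_{e_d})$ is treated formally. The leading coefficient $a_\alpha-\sum_j c^{(j)}_\alpha$ must then be nonzero; if not, we would use a combinatorial decomposition of $a_\alpha$ as a sum over products of the factors, the way the paper's multi-index notation $a_\alpha=a_{|\alpha|}/\binom{|\alpha|}{\alpha}$ suggests.

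Once $\mu_\alpha=\boldsymbol\lambda^\alpha$ for all $\alpha$, boundedness and density of polynomials show that $\Lambda(f)=\sum_\alpha \hat f(\alpha)\boldsymbol\lambda^\alpha$ is the continuous extension of evaluation at $\boldsymbol\lambda$. Condition (2) of \cref{theorem_Main_1} then follows formally, since $\Lambda(k_{\bw}^2)=k(\boldsymbol\lambda,\bw)^2=\Lambda(k_{\bw})^2$ (the series $\sum a_\alpha\overline{\bw}^\alpha\boldsymbol\lambda^\alpha$ converges by boundedness). That theorem yields multiplicativity. The tensor case runs the same way, with separate variables $\bw_j$ in each factor.
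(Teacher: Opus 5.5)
Steps 1 and 2 are essentially right, and the coefficient comparison in Step 3 follows the paper's strategy: compare coefficients of $\overline{\bw}^{\alpha}$ near $\mathbf 0$ and induct on $|\alpha|$. Multiplying through by $\prod_j\Lambda(1/s_{j_{\bw}})$, instead of expanding geometric series, and comparing with the formal identity $k(\boldsymbol\lambda,\bw)\prod_j s_j(\boldsymbol\lambda,\bw)^{-1}=1$ is tidier than the paper's bookkeeping. It collapses the level-$\alpha$ identity to
\[
\Bigl(a_\alpha-\sum_{j=1}^p c^{(j)}_\alpha\Bigr)\bigl(\mu_\alpha-\boldsymbol\lambda^\alpha\bigr)=0 .
\]

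The gap is that you never show this leading coefficient is nonzero for $|\alpha|\ge 2$. You say it ``must'' be nonzero and, ``if not'', propose a combinatorial decomposition based on $a_\alpha=a_{|\alpha|}/\binom{|\alpha|}{\alpha}$. That fallback cannot work. If the coefficient vanished, the level-$\alpha$ identity would say nothing about $\mu_\alpha$. Moreover, $a_\alpha=a_{|\alpha|}/\binom{|\alpha|}{\alpha}$ describes unitarily invariant kernels, and a general Schur product of diagonal kernels is not of that form. This nonvanishing is the only place the complete Pick hypothesis enters the sufficiency direction. It also genuinely depends on the degree: for $|\alpha|=1$ one has $a_{e_i}=\sum_j c^{(j)}_{e_i}$, so the coefficient is $0$ there.

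The missing argument is short. Put $B_j(x)=\sum_{\gamma\neq\mathbf 0}c^{(j)}_\gamma x^\gamma$, so that $k=\prod_{j=1}^p\sum_{n\ge0}B_j^n$ as power series in $x=\bz\odot\overline{\bw}$. Then $a_\alpha-\sum_j c^{(j)}_\alpha$ is the coefficient of $x^\alpha$ in the sum of all products $B_1^{n_1}\cdots B_p^{n_p}$ with $n_1+\cdots+n_p\ge2$.
\begin{itemize}
\item By \cref{theorem_CNPchar}, every $c^{(j)}_\gamma\ge0$.
\item $c^{(j)}_{e_i}$ equals the coefficient of $z_i\overline{w}_i$ in $s_j$, which is strictly positive because $s_j$ is diagonal.
\item Hence for $|\alpha|\ge2$ the single product $B_1^{|\alpha|}$ already contributes at least $\binom{|\alpha|}{\alpha}\prod_i (c^{(1)}_{e_i})^{\alpha_i}>0$.
\end{itemize}
This is the positivity the paper invokes after \eqref{equation18}, coming from the decomposition of $\alpha$ into unit vectors.

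In the tensor case, write $\alpha=(\alpha_1,\ldots,\alpha_p)$.
\begin{itemize}
\item If at least two blocks $\alpha_j$ are nonzero, the leading coefficient is $a_\alpha$ itself, because the $\beta=\mathbf 0$ term of the product of reciprocal factors then involves only $\mu$'s of lower total degree.
\item If only the $j$th block is nonzero, the leading coefficient is $a_\alpha-c^{(j)}_{\alpha_j}$.
\end{itemize}
Both are positive for total degree at least $2$, by the same reasoning.

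Two smaller points.
\begin{itemize}
\item In Step 2 you assert the $s_{j_{\bw}}$ are multipliers, but Step 1 only gave you their reciprocals. You do not need it: starting from $k_{\bw}\in\mathcal H(k)$ and multiplying successively by the multipliers $s_{j_{\bw}}^{-1}$ keeps every intermediate product in $\mathcal H(k)$.
\item The final detour through \cref{theorem_Main_1} is unnecessary. Once $\Lambda(\bz^\alpha)=\boldsymbol\lambda^\alpha$ for all $\alpha$, the computation \eqref{general-entire-space} gives multiplicativity directly. If you do use \cref{theorem_Main_1}, you must also check $k_{\bw}^2\in\mathcal H(k)$.
\end{itemize}
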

For brevity, whenever we refer to a complete Pick kernel, we will always mean a normalized, irreducible, diagonal complete Pick kernel on $\Omega$.
Part~\textup{(1)} of \cref{completePick} applies in particular to kernels that are powers of normalized, irreducible complete Pick kernels; see \cref{Lemma_SP}. This class includes the weighted Dirichlet and weighted Bergman spaces. On the other hand, when each $s_j$ is the Szeg\"o kernel on $\mathbb D$, their tensor product is precisely the reproducing kernel of the Hardy space on the polydisc. Thus, the Hardy space on the polydisc provides a fundamental example covered by part~\textup{(2)}. The boundedness assumption is again essential, as illustrated in Example~4 of ~\cref{examples:MainTheorems}.

We now extend the scope of \cref{theorem_Main_1} by considering spaces of Dirichlet series. The study of the Hardy space of Dirichlet
series was initiated by Hedenmalm, Lindqvist and Seip in their seminal work \cite{HLS}, in which they characterized its multiplier algebra and resolved a completeness problem originally posed by Beurling.
Recall that {\it the Hardy space over the infinite polydisc} is the reproducing
kernel Hilbert space associated with the kernel
$$
\mathbb S_\infty(z, w): = \prod_{j=1}^\infty \frac{1}{1 - z_j \overline{w_j}},
\qquad
z = (z_j)_{j=1}^\infty,\ w = (w_j)_{j=1}^\infty \in \mathbb D^\infty \cap \ell^2(\mathbb N).
$$
Due to a celebrated observation of Bohr \cite{Bohr}, a Dirichlet series can be
viewed as a power series in infinitely many variables. Via Bohr's correspondence,
it is well known (see \cite[Section~2.2]{HLS}) that $\mathcal H(\mathbb S_\infty)$ can be identified with
the Hardy space of Dirichlet series, whose reproducing kernel is
$$
(\lambda, u) \mapsto \sum_{n=1}^\infty n^{-\lambda - \overline{u}} = \zeta(\lambda + \overline{u}),
\qquad \lambda, u \in \mathbb H_{\frac{1}{2}},
$$
where $\mathbb H_\rho$ is the half-plane  $\{\, \lambda \in \mathbb C : \text{Re}(\lambda) > \rho \,\}$ and $\zeta$ is
the Riemann zeta function. The difference is that functions in $\mathcal H(\mathbb S_\infty)$ are defined on
$\mathbb D^\infty \cap \ell^2(\mathbb N)$, whereas functions in the Hardy space of Dirichlet series are
defined on the right half-plane $\mathbb H_{1/2}$.
The significance of the Hardy space of Dirichlet series associated with the Riemann
zeta kernel, together with its close connection with $\mathcal H(\mathbb S_\infty)$, motivates the study of
bounded multiplicative linear functionals on the Hardy space of Dirichlet series, and more generally, on
the Hilbert space associated with a diagonal Dirichlet series kernel
$$
k(\lambda, u) = \sum_{n=1}^\infty c_n\, n^{-\lambda - \overline{u}},
\qquad \lambda, u \in \mathbb H_{\rho},
$$
 where $\{c_n\}_{n=1}^\infty$ is a sequence of strictly positive real numbers, and $\rho\in\mathbb{R}$. Deferring the definitions and basic theory of Dirichlet series to ~\cref{Dirichlet_Section}, where they
are explained in context, we state the theorem here.
\begin{theorem} \label{theorem_Main_3}
	Let $\rho \in \mathbb R$, and let $k$ be a diagonal Dirichlet series kernel on $\mathbb H_{\rho}$. Suppose there exists $t > \rho$ such that $k_{t}^2\in \mathcal H(k)$. Then $k_{u}^2\in \mathcal H(k)$ for every $u \in \mathbb H_{t}$. A bounded linear functional  $\Lambda : \mathcal H(k) \to \mathbb C$ 
	satisfying $\Lambda(1) = 1$ is multiplicative if and only if  $\Lambda(k_{u}^2) = \Lambda(k_{u})^2		$ for every $u \in \mathbb H_{t}$.
\end{theorem}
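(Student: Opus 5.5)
The plan has two parts. First we show that $k_u^2\in\mathcal H(k)$ for all $u\in\mathbb H_t$. Then we run the argument of \cref{theorem_Main_1} with monomials $n^{-\lambda}$ in place of $\bz^\alpha$.

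\emph{Membership of $k_u^2$.} Since $k$ is a diagonal Dirichlet series kernel, $\mathcal H(k)$ consists of the series $f(\lambda)=\sum_n b_n n^{-\lambda}$ with $\|f\|^2=\sum_n |b_n|^2/c_n<\infty$. The monomials $e_n(\lambda)=n^{-\lambda}$ are orthogonal with $\|e_n\|^2=1/c_n$. Write
\[
k_u^2(\lambda)=\sum_{N} d_N(u)\, N^{-\lambda},\qquad d_N(u)=\sum_{mn=N} c_mc_n\,(mn)^{-\overline u}=N^{-\overline u}\sum_{mn=N}c_mc_n .
\]
Then $\|k_u^2\|^2=\sum_N |N^{-\overline u}|^2 D_N^2/c_N$, where $D_N=\sum_{mn=N}c_mc_n>0$. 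This equals $\sum_N N^{-2\mathrm{Re}\,u}D_N^2/c_N$. The summand is monotone decreasing in $\mathrm{Re}\,u$, so finiteness at $u=t$ gives finiteness for every $u$ with $\mathrm{Re}\,u\ge t$, and in particular on $\mathbb H_t$.

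\emph{Necessity.} Suppose $\Lambda$ is multiplicative. Since $k_u\in\mathcal H(k)$ and $k_u^2\in\mathcal H(k)$, the definition of multiplicativity gives $\Lambda(k_u^2)=\Lambda(k_u)^2$ immediately.

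\emph{Sufficiency.} Let $\beta_n=\overline{\Lambda(e_n)}$, with $\beta_1=1$ because $\Lambda(1)=1$. By the Riesz representation, $\Lambda(f)=\langle f,g\rangle$ with $g=\sum_n c_n\beta_n e_n$, i.e. $\Lambda(f)=\sum_n b_n\overline{\beta_n}$. Since $\Lambda$ is bounded, $\sum_n c_n|\beta_n|^2<\infty$.

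The hypothesis then reads
\[
\sum_N N^{-\overline u} D_N\,\overline{\beta_N}=\Big(\sum_n c_n n^{-\overline u}\,\overline{\beta_n}\Big)^2 \qquad\text{for all } u\in\mathbb H_t .
\]
Expanding the right side as a Dirichlet series gives $\sum_N N^{-\overline u}\sum_{mn=N}c_mc_n\overline{\beta_m\beta_n}$. Both sides are Dirichlet series in $s=\overline u$, convergent absolutely on a half-plane: the right side by Cauchy–Schwarz, since $\sum_n c_n n^{-2\mathrm{Re}\,s}<\infty$ there, being $k(s,s)$. They agree on an open set, so uniqueness of Dirichlet coefficients gives
\[
D_N\beta_N=\sum_{mn=N}c_mc_n\beta_m\beta_n\qquad\text{for all } N. \tag{$\ast$}
\]

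\emph{Extracting multiplicativity.} This is the main obstacle: we must pass from $(\ast)$ to $\beta_{mn}=\beta_m\beta_n$. The quadratic constraint alone does not force this; the weights matter. I would argue by strong induction on $N$, in analogy with the degree induction in \cref{theorem_Main_1}. In $(\ast)$ the divisor pairs $m=1$ and $m=N$ contribute $2c_1c_N\beta_N$, while the left side is $D_N\beta_N=(2c_1c_N+R_N)\beta_N$, where $R_N=\sum_{mn=N,\,1<m<N}c_mc_n$. By induction, every proper factor satisfies $\beta_m=\prod_{p^a\|m}\beta_{p^a}$, so $(\ast)$ becomes $R_N\beta_N=\sum_{mn=N,\,1<m<N}c_mc_n\beta_m\beta_n$.

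For $N=p$ prime, $R_N=0$ and nothing is determined, which is consistent with the $\beta_p$ being free. For composite $N$ I expect to need the additional structure present in the paper's definition of a diagonal Dirichlet series kernel (deferred to \cref{Dirichlet_Section}). Presumably that structure makes it a Bohr lift of a diagonal holomorphic kernel in the variables $z_j=p_j^{-\lambda}$. In that case I would reduce to \cref{theorem_Main_1} via Bohr's correspondence in finitely many variables, restricting to the subspaces spanned by $\{n^{-\lambda}: n$ has prime factors among $p_1,\dots,p_d\}$ and letting $d\to\infty$. There the multiplicativity of $\Lambda$ on monomials, $\beta_{mn}=\beta_m\beta_n$, follows from the diagonal theorem.

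Finally, multiplicativity on all pairs $f,h$ with $fh\in\mathcal H(k)$ follows from multiplicativity on monomials. Approximate $f$ and $h$ by their truncations and use the bounded representation $\Lambda(f)=\sum b_n\overline{\beta_n}$ together with absolute convergence of the Dirichlet products on $\mathbb H_t$.
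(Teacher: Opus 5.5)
You have the membership argument, the necessity direction, the coefficient identity $(\ast)$ and the final passage from monomials to general $f,h$ exactly as the paper does. The gap is at the step you flag as the main obstacle. You never derive $\beta_{mn}=\beta_m\beta_n$ from $(\ast)$, and you assert that $(\ast)$ alone cannot force it.

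That conclusion comes from a mis-set induction. Your hypothesis $\beta_m=\prod_{p^a\,\|\,m}\beta_{p^a}$ leaves the prime-power values free, so the terms on the right of $R_N\beta_N=\sum_{mn=N,\,1<m<N}c_mc_n\beta_m\beta_n$ need not coincide. For instance, for $N=p^2q$ they are $\beta_p^2\beta_q$ and $\beta_{p^2}\beta_q$. It is also not the target, which is complete multiplicativity.

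Positivity of the $c_n$ is all that is needed. Let $v_p(m)$ be the exponent of $p$ in $m$, and let $\ell(N)=\sum_p v_p(N)$. Induct on $\ell(N)$ with the hypothesis $\beta_m=\prod_p\beta_p^{v_p(m)}$ whenever $\ell(m)<\ell(N)$. If $\ell(N)\ge 2$, every pair $mn=N$ with $1<m<N$ gives $\beta_m\beta_n=\prod_p\beta_p^{v_p(N)}$, the same number for every pair. Moreover $R_N\ge c_mc_{N/m}>0$, so dividing yields $\beta_N=\prod_p\beta_p^{v_p(N)}$. This is precisely the paper's recursive formula \eqref{recursive-formula} with its induction on $\sum_i\alpha_i$, the same mechanism as in \cref{theorem_Main_1}.

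Your fallback, a Bohr lift to \cref{theorem_Main_1} on finitely many primes, does not close the gap as written. The hypothesis gives $\Lambda(k_u^2)=\Lambda(k_u)^2$ only for the full kernel $k_u$, and only along the one-parameter curve $z_j=p_j^{-\overline{u}}$. \cref{theorem_Main_1} instead needs the identity for the restricted kernel $\sum_{\alpha\in\mathbb Z_+^d}c_{p^\alpha}z^\alpha\overline{w}^\alpha$ at every $w$ in a $d$-dimensional domain, with $K_w^2$ in the space there.

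To supply that, you would first have to pass through $(\ast)$ for those $N$ whose prime factors lie in $\{p_1,\dots,p_d\}$ (their divisors stay in this set) and then re-sum. Invoking \cref{theorem_Main_1} at that point just reruns the induction above. No extra kernel structure is needed for the lift to exist, so your ``presumably'' is moot, but the reduction is circuitous rather than an independent argument.
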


As an application of the preceding characterizations of multiplicative linear functionals, we show that a weighted composition operator can be detected just by checking its actions on certain special functions stemming from the  kernel functions. 

\begin{defn}
A bounded operator $T$ on a reproducing kernel Hilbert space $\mathcal H(k)$ containing the constant functions is a weighted composition operator if there is a holomorphic map $\varphi:\Omega\to\Omega$ satisfying 
		\begin{equation*}
		(Tf)(\bz)=h(\bz)f(\varphi(\bz)),
		\qquad f\in\mathcal H(k),\quad \bz\in\Omega,
			\end{equation*}
where $h \in \mathcal H(k)$ is the function $T(1)$.
\end{defn}

\begin{theorem}
	\label{theorem_application_main}
Let $k$ be a holomorphic kernel on $\Omega$ such that every nonzero multiplicative linear functional on $\mathcal H(k)$ is given by point evaluation at some point of $\Omega$. Let $T \in \mathcal B(\mathcal H(k))$ and suppose that $h:= T(1)$ is nowhere zero on $\Omega$. Then, each of the following three conditions is necessary and sufficient for $T$ to be a weighted composition operator. 
	\begin{enumerate}
		\item[(i)] The kernel k is a diagonal holomorphic kernel, $k_{\bw}^2 \in \mathcal H(k)$ and 
		$T(k_{\bw}^2)\,T(1)=(Tk_{\bw})^2$ for all $\bw\in\Omega$.
	\item[(ii)] The kernel $k$ is a Schur product $s_1\circ\cdots\circ s_p$  where $s_1,\ldots,s_p$ are  normalized, irreducible, diagonal complete Pick kernels and	
	$$T(k_{\bw})\, \prod_{j=1}^{p}T\Big(\frac{1}{s_{j_{\bw}}}\Big) = (T(1))^{p+1} \quad \text{ for all } \bw \in \Omega.$$
 \item[(iii)] The kernel $k$ is a tensor product $s_1\otimes\cdots\otimes s_p$ where $s_1,\ldots,s_p$ are  normalized, irreducible, diagonal complete Pick kernels and
 $$T(k_{\bw})\, \prod_{j=1}^{p}T\Big(\frac{1}{s_{j_{\bw_j}}}\Big) = (T(1))^{p+1} \quad \text{  for all } \bw = (\bw_1,\ldots,\bw_p) \in \Omega \times \cdots \times \Omega$$
 where $k_{\bw}$ is as defined just before \cref{completePick}.
	\end{enumerate}
\end{theorem}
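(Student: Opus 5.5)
Necessity is the same in all three cases. If $T$ is a weighted composition operator, $(Tf)(\bz)=h(\bz)f(\varphi(\bz))$, then for each fixed $\bz\in\Omega$ the functional $f\mapsto (Tf)(\bz)/h(\bz)$ is evaluation at $\varphi(\bz)$. This functional is therefore multiplicative.

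Evaluating at $\bz$ then gives each identity. In (i), $T(k_{\bw}^2)(\bz)\,h(\bz)=h(\bz)^2k_{\bw}(\varphi(\bz))^2=(Tk_{\bw})(\bz)^2$. In (ii), $T(k_{\bw})(\bz)\prod_j T(1/s_{j_{\bw}})(\bz)=h(\bz)^{p+1}k_{\bw}(\varphi(\bz))\prod_j s_{j_{\bw}}(\varphi(\bz))^{-1}$. This equals $h(\bz)^{p+1}$ because $k_{\bw}=\prod_j s_{j_{\bw}}$ pointwise. Case (iii) is identical, with $k_{\bw}(\bz_1,\ldots,\bz_p)=\prod_j s_{j_{\bw_j}}(\bz_j)$. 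The membership $1/s_{j_{\bw}}\in M(\mathcal H(k))\subseteq\mathcal H(k)$ comes from \cref{completePick}, so $T(1/s_{j_{\bw}})$ makes sense.

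For sufficiency, fix $\bz\in\Omega$ and define
\[
\Lambda_{\bz}(f)=\frac{(Tf)(\bz)}{h(\bz)} .
\]
This is a bounded linear functional, being a composition of the bounded operator $T$ with the bounded evaluation at $\bz$, divided by the nonzero number $h(\bz)$. It satisfies $\Lambda_{\bz}(1)=1$.

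Dividing the hypothesis of (i) by $h(\bz)^2$ gives $\Lambda_{\bz}(k_{\bw}^2)=\Lambda_{\bz}(k_{\bw})^2$ for all $\bw$, so \cref{theorem_Main_1} makes $\Lambda_{\bz}$ multiplicative. In (ii) and (iii), dividing by $h(\bz)^{p+1}$ gives
\[
\Lambda_{\bz}(k_{\bw})\prod_j\Lambda_{\bz}(1/s_{j_{\bw}})=1 .
\]
In particular $\Lambda_{\bz}(k_{\bw})\neq0$, so this is exactly the criterion of \cref{completePick}(1) or (2), and $\Lambda_{\bz}$ is multiplicative.

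Since $\Lambda_{\bz}\neq0$, the standing hypothesis gives a point $\varphi(\bz)\in\Omega$ with $\Lambda_{\bz}(f)=f(\varphi(\bz))$ for all $f$. Hence $(Tf)(\bz)=h(\bz)f(\varphi(\bz))$. The point $\varphi(\bz)$ is unique because the functions in $\mathcal H(k)$ separate points; the coordinate functions do this when the kernel is diagonal, since all $a_\alpha>0$.

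The main obstacle is showing that $\varphi:\Omega\to\Omega$ is holomorphic. I would use the coordinate functions $z_j$, which lie in $\mathcal H(k)$ because the kernel is diagonal with $a_{e_j}>0$. Then $\varphi_j(\bz)=T(z_j)(\bz)/h(\bz)$ is a quotient of holomorphic functions with nonvanishing denominator, hence holomorphic. For the tensor-product case (iii) the same argument applies to the coordinate functions on $\Omega^p$. Finally $h=T(1)\in\mathcal H(k)$, so $T$ is a weighted composition operator.
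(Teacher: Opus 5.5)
Your proposal is correct and follows the paper's argument. Your functional $\Lambda_{\bz}(f)=(Tf)(\bz)/h(\bz)$ is exactly the paper's $f\mapsto\langle f,\overline{h(\bw)}^{-1}T^*k_{\bw}\rangle$; from there both arguments invoke \cref{theorem_Main_1} or \cref{completePick} together with the point-evaluation hypothesis, and both obtain holomorphy of $\varphi$ from $z_j\circ\varphi=T(z_j)/h$. You also write out cases (ii) and (iii) explicitly, which the paper leaves as similar.
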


There are several important classes of spaces for which the multiplicative linear functionals under consideration are given by point evaluations. These include, in particular, the Hardy space $H^2(\mathbb{D})$ on the unit disc \cite{mashreghi2015gleason}, the Drury--Arveson spaces $H^2_d$ on the unit ball of $\mathbb C^d$ \cite{Hartz-Can}, for $1\le d\le \infty$, and, more generally, certain complete Pick spaces on the Euclidean ball; see \cite[Corollary 1.3]{AMMcR} together with \cite[Lemma 5.1]{Hartz-Can}. The same conclusion also holds for a class of unitarily invariant kernels under suitable conditions on their coefficient sequences. The diagonal Dirichlet series kernels cannot be included in \cref{theorem_application_main} because unlike the other kernels considered in this note, for a diagonal Dirichlet series kernel, there always are uncountably infinitely many  multiplicative linear functionals which cannot be realized as evaluations at a point in the domain, see  Lemma  \ref{Always_mlf_not_pointev}.

\section{Proof of \cref{theorem_Main_1}} The implication $(1)\Rightarrow(2)$ is immediate, we therefore prove $(2)\Rightarrow(1)$. Since $k$ is a diagonal holomorphic kernel, the expansion
	$$
	k(\bz, \bw)
	=
	\sum_{\alpha\in\mathbb Z_+^d}
	a_\alpha (\bz\odot\overline{\bw})^\alpha 
	$$
	is valid in a neighbourhood of origin and the monomials form an orthogonal basis of
	$\mathcal H(k)$, see \cite[Section~2.3]{MT}. In particular, the polynomials are contained in $\mathcal H(k)$ and are
	dense in $\mathcal H(k)$. 	Henceforth, all calculations will be carried out in a neighborhood of $\mathbf 0$. 	
	Fix $\bw\in\Omega$. The monomial expansion of the kernel function $k_{\bw}$
	is
	$$
	k_{\bw}(\bz)
	=
	\sum_{\alpha\in\mathbb Z_+^d}
	a_\alpha (\bz\odot \overline{\bw})^\alpha 
	=\sum_{\alpha\in\mathbb Z_+^d}
	a_\alpha \bz^\alpha\overline{\bw}^{\,\alpha},
	$$
	where the series converges in $\mathcal H(k)$. Since $\Lambda$ is bounded,
	we apply $\Lambda$ term by term and obtain
	$$
	\Lambda(k_{\bw})
	=
	\sum_{\alpha\in\mathbb Z_+^d}
	a_\alpha\Lambda(\bz^\alpha)\overline{\bw}^{\,\alpha}.
	$$
	Consequently,
\begin{equation*}
	\begin{aligned}
		\bigl[\Lambda(k_{\bw})\bigr]^2
		&=
		\Bigg(
		\sum_{\gamma\in\mathbb{Z}_+^d}
		a_\gamma \Lambda(\bz^\gamma)
		\overline{\bw}^{\,\gamma}
		\Bigg)
		\Bigg(
		\sum_{\eta\in\mathbb{Z}_+^d}
		a_\eta \Lambda(\bz^\eta)
		\overline{\bw}^{\,\eta}
		\Bigg) \\
		&=
		\sum_{\alpha\in\mathbb{Z}_+^d}
		\Bigg(
		\sum_{\gamma\leq\alpha}
		a_\gamma a_{\alpha-\gamma}
		\Lambda(\bz^\gamma)
		\Lambda(\bz^{\alpha-\gamma})
		\Bigg)
		\overline{\bw}^{\,\alpha}.
	\end{aligned}
\end{equation*}
	On the other hand, the pointwise square of $k_{\bw}$ has the power-series
	expansion
	\begin{equation*}
		k_{\bw}(\bz)^2 = 
		\Bigg(
		\sum_{\gamma\in\mathbb Z_+^d}
		a_\gamma\bz^\gamma\overline{\bw}^{\,\gamma}
		\Bigg)
		\Bigg(
		\sum_{\eta\in\mathbb Z_+^d}
		a_\eta\bz^\eta\overline{\bw}^{\,\eta}
		\Bigg)=
		\sum_{\alpha\in\mathbb Z_+^d}
		\Bigg(
		\sum_{\gamma\leq\alpha}
		a_\gamma a_{\alpha-\gamma}
		\Bigg)
		\bz^\alpha\overline{\bw}^{\,\alpha}.
	\end{equation*}
	By hypothesis, $k_{\bw}^2\in\mathcal H(k)$. Hence its monomial expansion
	converges in $\mathcal H(k)$, and the boundedness of $\Lambda$ gives
	$$
	\Lambda(k_{\bw}^2)
	=
	\sum_{\alpha\in\mathbb Z_+^d}
	\Bigg(
	\sum_{\gamma\leq\alpha}
	a_\gamma a_{\alpha-\gamma}
	\Bigg)
	\Lambda(\bz^\alpha)\overline{\bw}^{\,\alpha}.
	$$
	The hypothesis $\Lambda(k_{\bw}^2)=\Lambda(k_{\bw})^2$
	therefore implies
	\begin{equation*}
			\begin{aligned}
		&\sum_{\alpha\in\mathbb Z_+^d}
		\Bigg(
		\sum_{\gamma\leq\alpha}
		a_\gamma a_{\alpha-\gamma}
		\Bigg)
		\Lambda(\bz^\alpha)\overline{\bw}^{\,\alpha}=
		\sum_{\alpha\in\mathbb Z_+^d}
		\Bigg(
		\sum_{\gamma\leq\alpha}
		a_\gamma a_{\alpha-\gamma}
		\Lambda(\bz^\gamma)
		\Lambda(\bz^{\alpha-\gamma})
		\Bigg)
		\overline{\bw}^{\,\alpha}
			\end{aligned}
	\end{equation*}
	for every $\bw$ in a neighbourhood of $\mathbf 0$ in $\Omega$. Thus,
	\begin{equation}\label{eq:coefficient-identity}
		\Bigg(
		\sum_{\gamma\leq\alpha}
		a_\gamma a_{\alpha-\gamma}
		\Bigg)
		\Lambda(\bz^\alpha)
		=
		\sum_{\gamma\leq\alpha}
		a_\gamma a_{\alpha-\gamma}
		\Lambda(\bz^\gamma)
		\Lambda(\bz^{\alpha-\gamma})
	\end{equation}
	for every $\alpha\in\mathbb Z_+^d$. Because $\Lambda(1)=1$, the terms corresponding to
	$\gamma=\mathbf 0$ and $\gamma=\alpha$ on both sides of
	\eqref{eq:coefficient-identity} are equal. Removing these endpoint terms,
	we obtain
	\begin{equation}
		\label{eq:recursive-identity}
		\Bigg(
		\sum_{\substack{\gamma\leq\alpha\\
				\gamma\neq\mathbf 0,\alpha}}
		a_\gamma a_{\alpha-\gamma}
		\Bigg)
		\Lambda(\bz^\alpha) 
		=
		\sum_{\substack{\gamma\leq\alpha\\
				\gamma\neq\mathbf 0,\alpha}}
		a_\gamma a_{\alpha-\gamma}
		\Lambda(\bz^\gamma)
		\Lambda(\bz^{\alpha-\gamma}).
	\end{equation}
	For $\alpha=(\alpha_1,\ldots,\alpha_d)\in\mathbb{Z}_+^d$, set $\Lambda(\bz)^\alpha := \prod_{j=1}^d \Lambda(z_j)^{\alpha_j}.$
	We prove by induction on $|\alpha|$ that
	\begin{equation}\label{eq:monomial-character}
		\Lambda(\bz^\alpha)
		=
		\Lambda(\bz)^\alpha
		\qquad
		\text{for every }\alpha\in\mathbb Z_+^d.
	\end{equation}
	For $|\alpha|=0$, this is precisely the identity $\Lambda(1)=1$. For
	$|\alpha|=1$, the assertion is immediate. Suppose that \eqref{eq:monomial-character} holds for every multi-index of
	degree at most $m$. Let $|\alpha|=m+1$. Note that if
	$\gamma \in \mathbb Z_+^d$ satisfies $\gamma \leq \alpha$ and $\gamma \neq \mathbf 0, \alpha$, then
	$
	1\leq|\gamma|\leq m$
	and $1\leq|\alpha-\gamma|\leq m.
	$
	The induction hypothesis therefore gives
	\begin{equation*}
		\Lambda(\bz^\gamma)
		\Lambda(\bz^{\alpha-\gamma}) =
	\Lambda(\bz)^\gamma \Lambda(\bz)^{\alpha - \gamma} = \Lambda(\bz)^\alpha.
	\end{equation*}
	Thus every summand on the right-hand side of
	\eqref{eq:recursive-identity} contains the same factor
	$
\Lambda(\bz)^\alpha.$ Moreover, the coefficient
	$$
	\sum_{\substack{\gamma\leq\alpha\\
			\gamma\neq\mathbf 0,\alpha}}
	a_\gamma a_{\alpha-\gamma}
	$$
	is strictly positive. 
	Dividing \eqref{eq:recursive-identity} by the
	positive coefficient above, we obtain the required identity \eqref{eq:monomial-character} with $|\alpha| = m+1$. This completes the induction process and proves \eqref{eq:monomial-character}. Consequently, $\Lambda(\bz^{\alpha + \beta}) =  \Lambda(\bz^\alpha) \Lambda(\bz^\beta)$ for all $\alpha, \beta \in \mathbb Z_+^d.$ 
	
	To finish the proof, let 
$$f = \sum_{\alpha \in \mathbb Z_+^d} a_\alpha \mf z^\alpha \text{ and } g = \sum_{\alpha \in \mathbb Z_+^d} b_\alpha \mf z^\alpha$$ 
be in $\mathcal H(k)$ such that $fg \in \mathcal H(k)$. Then, by \eqref{eq:monomial-character} and the continuity of $\Lambda$, we have
\begin{equation}\label{general-entire-space}
	\begin{aligned}
		\Lambda(f)\Lambda(g)  = \Bigg(\sum_{\alpha \in \mathbb Z_+^d} a_\alpha  \Lambda(\mf z^\alpha)\Bigg)\Bigg(\sum_{\beta \in \mathbb Z_+^d} b_\beta \Lambda(\mf z^\beta)\Bigg) 
	&= \sum_{\alpha, \beta \in \mathbb Z_+^d} a_\alpha b_\beta \Lambda(\mf z^\alpha) \Lambda(\mf z^\beta)\\
	&= \sum_{\alpha, \beta \in \mathbb Z_+^d} a_\alpha b_\beta \Lambda\big(\mf z^{\alpha + \beta}\big)\\
	&= \sum_{\alpha \in \mathbb Z_+^d} \Big(\sum_{\beta \leq \alpha} a_\beta b_{\alpha - \beta}\Big) \Lambda(\mf z^{\alpha})\\
	&= \Lambda\Big(\sum_{\alpha \in \mathbb Z_+^d} \Big(\sum_{\beta \leq \alpha} a_\beta b_{\alpha - \beta}\Big) \mf z^{\alpha}\Big) = \Lambda(fg).
	\end{aligned}
\end{equation}	
This completes the proof.	\qed

\section{On the products of complete Pick kernels}\label{section:power}
\subsection{Preliminaries}
We begin with the following characterization. It can be established in essentially the same way as Lemma~2.3 of~\cite{Hartz-Can}, and thus we omit the proof.

\begin{thm}\label{theorem_CNPchar}
	Let $\Omega\subseteq\mathbb C^d$ be a domain containing the origin, and let $k$
	be a diagonal holomorphic kernel on $\Omega$. Then $k$ is a complete Pick kernel if and only if there exist nonnegative
	numbers $\{b_\alpha\}_{\alpha\in\mathbb Z_+^d\setminus\{\mf 0\}}$ with $b_{\alpha}>0$ for $|\alpha|=1$  such that 
	\[
	k(\bz,\bw)
	=
	\frac{1}
	{1-\langle b(\bz),b(\bw)\rangle},
	\]
	in a neighbourhood of $\textbf{0}$ where
	$b(\bz)
	=
	\bigl(b_\alpha \bz^\alpha\bigr)_{\alpha\in\mathbb Z_+^d\setminus\{\mf 0\}}
	\in\ell^2(\mathbb Z_+^d\setminus\{\mf 0\}).
	$
\end{thm}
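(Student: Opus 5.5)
We will use the McCullough--Quiggin characterization of complete Pick kernels, as in the Agler--McCarthy form underlying Lemma~2.3 of \cite{Hartz-Can}. It says: a kernel $k$ that is nonvanishing, normalized at a base point (here $\mathbf 0$, since $k(\bz,\mathbf 0)=a_{\mathbf 0}$ is constant) and irreducible is a complete Pick kernel if and only if
\[
F(\bz,\bw):=1-\frac{1}{k(\bz,\bw)}
\]
is positive semidefinite on $\Omega$. Equivalently, $F$ is positive semidefinite if and only if $F(\bz,\bw)=\langle b(\bz),b(\bw)\rangle$ for some Hilbert-space-valued map $b$.

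\textbf{Forward direction.} Assume $k$ is a complete Pick kernel. Near $\mathbf 0$ the kernel $k$ does not vanish, so we may expand
\[
\frac{1}{k(\bz,\bw)}=\frac{1}{1+\sum_{\alpha\neq\mathbf 0}a_\alpha(\bz\odot\overline{\bw})^\alpha}.
\]
By the geometric series, $1-1/k$ is a power series in $\bz\odot\overline{\bw}$ of the form $\sum_{\alpha\neq\mathbf 0}c_\alpha(\bz\odot\overline{\bw})^\alpha$ with real $c_\alpha$. This uses that $k$ is normalized ($a_{\mathbf 0}=1$); otherwise we first rescale.

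Since $1-1/k$ is positive semidefinite and diagonal in the monomials, we show $c_\alpha\ge 0$. To do this, test the kernel against measures of the form $\bz=r e^{i\theta}$ on a small torus and integrate against $e^{-i\alpha\cdot(\theta-\phi)}$. This extracts $c_\alpha r^{2\alpha}$ as a positive functional of a positive kernel, the standard Fourier-coefficient argument for diagonal kernels. Set $b_\alpha=\sqrt{c_\alpha}$. For $|\alpha|=1$ we have $c_{e_j}=a_{e_j}>0$ directly from the expansion. Then $b(\bz)\in\ell^2$ because $\sum_\alpha c_\alpha|\bz^\alpha|^2=1-1/k(\bz,\bz)<1$ near $\mathbf 0$.

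\textbf{Converse.} Assume $k=1/(1-\langle b(\bz),b(\bw)\rangle)$ near $\mathbf 0$. Then $1-1/k=\langle b(\bz),b(\bw)\rangle$ is positive semidefinite near $\mathbf 0$. We then need the identity and the positivity on all of $\Omega$.

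\textbf{Main obstacle.} The main obstacle is passing from a neighbourhood of $\mathbf 0$ to all of $\Omega$, in both directions. We will rely on the fact that the diagonal Hilbert space $\mathcal H(k)$ is determined by the coefficients $a_\alpha$, with monomials forming an orthogonal basis \cite[Section~2.3]{MT}. Since the Pick property is a statement about the space and its multipliers, it can be checked with the series near $\mathbf 0$. More concretely, we would argue as follows. The function $\bz\mapsto b(\bz)$ extends so that $\langle b(\bz),b(\bw)\rangle=\sum c_\alpha\bz^\alpha\overline{\bw}^\alpha$ equals $1-1/k$ wherever this is holomorphic. Positivity of $1-1/k$ on finite subsets of $\Omega$ then follows from the identity theorem together with irreducibility, which guarantees $k\neq 0$ so that $1/k$ is sesqui-holomorphic on $\Omega\times\Omega$.

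Irreducibility is also the hypothesis needed to invoke the McCullough--Quiggin criterion. We will verify it from $b_\alpha>0$ for $|\alpha|=1$, which makes $b$ injective, and from $|\langle b(\bz),b(\bw)\rangle|<1$.
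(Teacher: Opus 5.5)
Your strategy is the argument of Hartz's Lemma~2.3, to which the paper defers: McCullough--Quiggin in the form ``a normalized irreducible kernel is complete Pick iff $1-1/k$ is positive semidefinite'', plus torus averaging to show that a positive diagonal kernel has nonnegative coefficients. Your forward direction is fine.

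The gap is in the converse, at the step you label the main obstacle. The hypothesis only gives $k=1/(1-\langle b(\bz),b(\bw)\rangle)$ near $\mathbf 0$. To apply the criterion you need, on all of $\Omega$:
that $b(\bz)\in\ell^2$ with $\|b(\bz)\|<1$;
that $k$ has no zeros;
and that $k_{\bz}$ and $k_{\bw}$ are independent.

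You get the global identity from the identity theorem ``together with irreducibility, which guarantees $k\neq 0$''. You then propose to verify irreducibility from $|\langle b(\bz),b(\bw)\rangle|<1$. That bound is only available where $\sum_\alpha b_\alpha^2|\bz^\alpha|^2$ is known to converge and be $<1$, i.e.\ near $\mathbf 0$, so the argument is circular. The phrase ``$b$ extends'' is exactly the unproved claim. The identity theorem can only compare two functions already known to be sesqui-holomorphic on $\Omega\times\Omega$. You have shown neither that $\sum_\alpha b_\alpha^2\bz^\alpha\overline{\bw}^{\alpha}$ converges there nor that $1/k$ is defined there.

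The missing idea is a coefficient comparison that uses $b_\alpha\ge 0$.

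Near $\mathbf 0$ you have $k=1+k\,\langle b(\bz),b(\bw)\rangle$. Comparing coefficients of $(\bz\odot\overline{\bw})^\alpha$ gives, for $\alpha\neq\mathbf 0$, $a_\alpha=\sum_{\mathbf 0\neq\gamma\le\alpha}b_\gamma^2\,a_{\alpha-\gamma}$. This is a sum of nonnegative terms containing $b_\alpha^2 a_{\mathbf 0}=b_\alpha^2$, so $b_\alpha^2\le a_\alpha$.

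Since the defining series of $k$ converges on $\Omega\times\Omega$, this gives $\|b(\bz)\|^2\le k(\bz,\bz)<\infty$ for every $\bz\in\Omega$. By Cauchy--Schwarz, both series then converge absolutely at every $(\bz,\bw)\in\Omega\times\Omega$. Multiplying them and using the recursion yields $k(\bz,\bw)\bigl(1-\langle b(\bz),b(\bw)\rangle\bigr)=1$ on all of $\Omega\times\Omega$. Taking $\bw=\bz$ gives $\|b(\bz)\|^2=1-1/k(\bz,\bz)<1$.

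Only at this point are the needed facts established: $k$ does not vanish; $k$ is irreducible (your argument via $b_{e_j}>0$, together with $k(\cdot,\mathbf 0)=1$); and $1-1/k=\langle b(\bz),b(\bw)\rangle$ is positive on $\Omega$. The criterion then applies. A Pringsheim argument along the rays $t\mapsto t(|z_1|^2,\ldots,|z_d|^2)$ would also work.

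Finally, ``otherwise we first rescale'' does not help. The displayed identity forces $k(\mathbf 0,\bw)=1$, so normalization must be a standing hypothesis, as in the paper's convention.
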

  We also need an elementary lemma whose proof we omit because it follows from the fact that $b^{(m)}$ is a contractive (row) multiplier where 
  \begin{equation}\label{equation:pair1}
  	s_m(\bz, \bw) = \frac{1}{1-\langle b^{(m)}(\bz), b^{(m)}(\bw)\rangle},
  \end{equation} 
  see Theorem 8.2 of \cite{AMbook}.

\begin{lemma}\label{lemma_ker}
	Let $s_1, \ldots, s_p$ be complete Pick kernels on $\Omega$ and let $k = s_1 \circ \cdots \circ s_p$, the Schur product. For every $\bw$, the kernel function $s_{{j}_{\bw}}^{-1}$ is in $M(\mathcal{H}(k))$ for all $j =1, \ldots, p$.  
\end{lemma}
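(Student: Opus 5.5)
Fix $j$ and $\bw\in\Omega$. By \cref{theorem_CNPchar} and \eqref{equation:pair1}, near the origin
\[
\frac{1}{s_{j_{\bw}}(\bz)} = 1-\langle b^{(j)}(\bz),b^{(j)}(\bw)\rangle = 1-\sum_{\alpha\neq\mathbf 0} b^{(j)}_\alpha \overline{b^{(j)}_\alpha\bw^{\alpha}}\,\bz^\alpha .
\]
The plan is to show two things. First, multiplication by $1/s_{j_{\bw}}$ is bounded on $\mathcal H(k)$. Second, this identity extends to all of $\Omega$ by analytic continuation, since both sides are holomorphic and $s_j$ is nonvanishing by irreducibility.

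The key input is Theorem~8.2 of \cite{AMbook}. It says the row $b^{(j)}(\bz)=(b^{(j)}_\alpha\bz^\alpha)_\alpha$ is a contractive row multiplier of $\mathcal H(s_j)$, i.e.
\[
\bigl(1-\langle b^{(j)}(\bz),b^{(j)}(\bw')\rangle\bigr)s_j(\bz,\bw')\succeq 0 ,
\]
and this is trivially the constant kernel $1$. I transfer this to $\mathcal H(k)$ with $k=s_j\circ\prod_{i\neq j}s_i$. Multiply the positive kernel $(1-\langle b^{(j)}(\bz),b^{(j)}(\bw')\rangle)s_j(\bz,\bw')=1$ by the positive kernel $k(\bz,\bw')$. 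The Schur product theorem then gives
\[
\bigl(I-\langle b^{(j)}(\bz),b^{(j)}(\bw')\rangle\bigr)k(\bz,\bw')\succeq 0 .
\]
This is exactly the statement that $M_{b^{(j)}}:\mathcal H(k)\otimes\ell^2\to\mathcal H(k)$ is a contractive row multiplier. The same transfer shows each $\psi(\bz)=b^{(j)}_\alpha\bz^\alpha$ is a contractive multiplier of $\mathcal H(k)$.

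Next, write $c=(\overline{b^{(j)}_\alpha\bw^\alpha})_\alpha=\overline{b^{(j)}(\bw)}$. Since $k_j$ is defined at $\bw$, the sum $\sum|b^{(j)}_\alpha\bw^\alpha|^2<1$, so $c\in\ell^2$. The function $\langle b^{(j)}(\cdot),b^{(j)}(\bw)\rangle$ equals $M_{b^{(j)}}$ applied to the constant column $c$. Hence it is a multiplier of norm at most $\|c\|_{\ell^2}<1$, because a row multiplier composed with a constant column is a multiplier. Therefore $1/s_{j_{\bw}}=1-M_{b^{(j)}}c$ lies in $M(\mathcal H(k))$.

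The main obstacle is points $\bw$ outside the neighbourhood of $\mathbf 0$ where the expansion in \cref{theorem_CNPchar} is valid. I would handle this by noting that the row-multiplier positivity holds on all of $\Omega$ for complete Pick kernels. The Agler--McCarthy representation gives $s_j=1/(1-\langle b(\bz),b(\bw)\rangle)$ on all of $\Omega$ for some $\ell^2$-valued $b$ with $\|b(\bw)\|<1$. This follows from normalization at $\mathbf 0$ and irreducibility, and by uniqueness of analytic continuation it agrees with the diagonal $b^{(j)}$ near $\mathbf 0$. The argument above then goes through verbatim for every $\bw\in\Omega$.
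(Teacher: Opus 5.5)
Your proposal is correct and follows the paper's route. The paper only cites Theorem~8.2 of \cite{AMbook} for $b^{(j)}$ being a contractive row multiplier and reads off $1/s_{j_{\bw}}=1-\langle b^{(j)}(\cdot),b^{(j)}(\bw)\rangle$; your Schur-product transfer to $\mathcal H(k)$ and your use of the global Agler--McCarthy representation supply details the paper leaves implicit.

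One slip: the kernel to multiply by is $\prod_{i\neq j}s_i$, not $k$, since $\bigl(1-\langle b^{(j)}(\bz),b^{(j)}(\bw)\rangle\bigr)k(\bz,\bw)=\prod_{i\neq j}s_i(\bz,\bw)\succeq 0$. Also, the global $b$ need only agree with $b^{(j)}$ at the level of the Gram kernel $\langle b(\bz),b(\bw)\rangle$, not as functions, and that is all your argument uses.
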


\subsection{Proof of part (1) of \cref{completePick}:} 

The forward implication is immediate from the multiplicativity of $\Lambda$ and the fact that $s_{j_{\bw}}^{-1}$ belongs to the multiplier algebra and hence in the space for every $\bw\in\Omega$, by ~\cref{lemma_ker}. We shall only prove the reverse implication. 

For notational convenience, we prove the result only for $p=2$. Since the kernels $s_1$ and $s_2$ are of the form as in \eqref{equation:pair1}, we have 
\begin{equation*}
	\begin{aligned}
		\Lambda(k_{\bw})&=\frac{1}{\Lambda(\frac{1}{s_{1_{\bw}}})\Lambda(\frac{1}{s_{2_{\bw}}})}\\
		&=\frac{1}{\Lambda\left(1-\langle b^{(1)}(\bz), b^{(1)}(\bw)\rangle\right)\Lambda\left(1-\langle b^{(2)}(\bz), b^{(2)}(\bw)\rangle\right)}\\
		&= \frac{1}{\left(1-\Lambda[\langle b^{(1)}(\bz), b^{(1)}(\bw)\rangle]\right)\left(1-\Lambda\left[\langle b^{(2)}(\bz), b^{(2)} (\bw)\rangle\right]\right)}. 
	\end{aligned}
\end{equation*}
Observe that 
$\langle b^{(j)}(\bz),b^{(j)}(\mathbf 0)\rangle=\mathbf O,$ for $j=1,2,$
where $\mathbf O$ denotes the zero function. Considering the two families of functions continuously indexed by $\bw$ whose values at $\bz$ are $\langle b^{(j)}(\bz),b^{(j)}(\mathbf w)\rangle$ for $j=1,2$ and noting that  $\Lambda$ is continuous and $
\Lambda(\mathbf O)=0,$
it follows that there exists $\varepsilon>0$ such that
\[ \bigl|\Lambda(\langle b^{(1)}(\bz),b^{(1)}(\bw)\rangle)\bigr|<1
\quad\text{and}\quad
\bigl|\Lambda(\langle b^{(2)}(\bz),b^{(2)}(\bw)\rangle)\bigr|<1
\]
for every $\bw$ in a neighbourhood of the origin which we shall call $ X(\mathbf 0,\varepsilon)$. Throughout the remainder of the proof, all computations will be carried out with $\bw$ restricted to the neighbourhood $X(\mathbf 0,\varepsilon)$.  For $j=1,2$, fix a point $\bw$ and set
\[
B_{\bw}^{(j)}(\bz)
:=
\Lambda\bigl(\langle b^{(j)}(\bz),b^{(j)}(\bw)\rangle\bigr)
=
\sum_{\alpha\in\mathbb Z_+^d\setminus\{\mathbf0\}}
b_\alpha^{(j)}\Lambda(\bz^\alpha)\overline{\bw}^{\,\alpha}.
\]
Since
\[
\Lambda(k_{\bw})
=
\left(
\sum_{n=0}^\infty (B_{\bw}^{(1)}(\bz))^n
\right)
\left(
\sum_{m=0}^\infty (B_{\bw}^{(2)}(\bz))^m
\right),
\]
expanding each geometric series and taking the Cauchy product yields
\begin{equation}\label{equation16}
	\begin{aligned}
		\Lambda(k_{\bw})
		&=
		\sum_{\alpha\in\mathbb Z_+^d}
		\Biggl[
		\sum_{\delta_1+\delta_2=\alpha}
		C_{\delta_1}^{(1)}(\bz)
		C_{\delta_2}^{(2)}(\bz)
		\Biggr]
		\overline{\bw}^{\,\alpha},
	\end{aligned}
\end{equation}
where $C_{\mathbf0}^{(j)}(\bz):=1,$ and for a multi-index $\delta\neq\mathbf0$,
\[
C_\delta^{(j)}(\bz)
=
\sum_{q=1}^{|\delta|}
\;
\sum_{\substack{
		\gamma_1,\ldots,\gamma_q\in
		\mathbb Z_+^d\setminus\{\mathbf0\}\\
		\gamma_1+\cdots+\gamma_q=\delta}}
b_{\gamma_1}^{(j)}\cdots b_{\gamma_q}^{(j)}
\Lambda(\bz^{\gamma_1})\cdots
\Lambda(\bz^{\gamma_q}).
\]
On the other hand, since $k_{\bw}=s_{1_{\bw}} \circ s_{2_{\bw}},$ we obtain
\begin{equation}\label{equation17}
	\begin{aligned}
		\Lambda(k_{\bw})
		&=
		\Lambda\!\left[
		\frac1{1-\langle b^{(1)}(\bz),b^{(1)}(\bw)\rangle}
		\frac1{1-\langle b^{(2)}(\bz),b^{(2)}(\bw)\rangle}
		\right]                                    \\
		&=
		\sum_{\alpha\in\mathbb Z_+^d}
		\Biggl[
		\sum_{\delta_1+\delta_2=\alpha}
		c_{\delta_1}^{(1)}
		c_{\delta_2}^{(2)}
		\Biggr]
		\Lambda(\bz^\alpha)
		\overline{\bw}^{\,\alpha},
	\end{aligned}
\end{equation}
where $
c_{\mathbf0}^{(j)}:=1,$ and, for a multi-index $\delta\neq\mathbf0$,
\[
c_\delta^{(j)}
=
\sum_{q=1}^{|\delta|}
\;
\sum_{\substack{
		\gamma_1,\ldots,\gamma_q\in
		\mathbb Z_+^d\setminus\{\mathbf0\}\\
		\gamma_1+\cdots+\gamma_q=\delta}}
b_{\gamma_1}^{(j)}\cdots b_{\gamma_q}^{(j)}.
\]
Since both sides are analytic in $\bw$, the corresponding power series in $\overline{\bw}$ agree on $X(0,\varepsilon)$. Comparing the coefficients of
$\overline{\bw}^{\,\alpha}$ in \eqref{equation16} and \eqref{equation17}, we conclude that
\[
\sum_{\delta_1+\delta_2=\alpha}
C_{\delta_1}^{(1)}(\bz)
C_{\delta_2}^{(2)}(\bz)
=
\left(
\sum_{\delta_1+\delta_2=\alpha}
c_{\delta_1}^{(1)}
c_{\delta_2}^{(2)}
\right)
\Lambda(\bz^\alpha),
\qquad
\alpha\in\mathbb Z_+^d.
\]
Substituting the definitions of $C_{\delta}^{(j)}$ and $c_{\delta}^{(j)}$, we obtain
\begin{equation}\label{equation18}
	\begin{aligned}
		&
		\sum_{\delta_1+\delta_2=\alpha}
		\Biggl(
		\sum_{r=1}^{|\delta_1|}
		\sum_{\substack{
				\gamma_1+\cdots+\gamma_r=\delta_1}}
		b_{\gamma_1}^{(1)}\cdots b_{\gamma_r}^{(1)}
		\Lambda(\bz^{\gamma_1})\cdots
		\Lambda(\bz^{\gamma_r})
		\Biggr)                                   \\
		&\qquad\qquad\qquad\times
		\Biggl(
		\sum_{q=1}^{|\delta_2|}
		\sum_{\substack{
				\eta_1+\cdots+\eta_q=\delta_2}}
		b_{\eta_1}^{(2)}\cdots b_{\eta_q}^{(2)}
		\Lambda(\bz^{\eta_1})\cdots
		\Lambda(\bz^{\eta_q})
		\Biggr)                                   \\
		&=
		\left(
		\sum_{\delta_1+\delta_2=\alpha}
		\Biggl(
		\sum_{r=1}^{|\delta_1|}
		\sum_{\substack{
				\gamma_1+\cdots+\gamma_r=\delta_1}}
		b_{\gamma_1}^{(1)}\cdots b_{\gamma_r}^{(1)}
		\Biggr)
		\Biggl(
		\sum_{q=1}^{|\delta_2|}
		\sum_{\substack{
				\eta_1+\cdots+\eta_q=\delta_2}}
		b_{\eta_1}^{(2)}\cdots b_{\eta_q}^{(2)}
		\Biggr)
		\right)
		\Lambda(\bz^\alpha),
	\end{aligned}
\end{equation}
for every $\alpha\in\mathbb Z_+^d\setminus\{\mathbf0\}$.

\qquad

\textbf{Claim:}
$\Lambda(\bz^{\alpha})=\Lambda(\bz)^{\alpha}$, for all  $\alpha =(\alpha_1,\ldots,\alpha_d)\in \mathbb{Z}_+^d$. 

\quad

The result is immediate for $\alpha = 0$ and for all $\alpha$ with $|\alpha| = 1$; for the general case, we proceed by induction. Let $\alpha = (\alpha_1,\ldots,\alpha_d)$ and suppose that $|\alpha| = 2$. 
Then there are two possibilities: either $\alpha_i = 2$ for some $i$, or 
$\alpha_i = \alpha_j = 1$ for some $i \neq j$. 
First, assume that the entry $2$ occurs in the $i$th position. Then we have
$$
\delta_1 = (0,\ldots,1,\ldots,0)=\delta_2
$$
where the entry $1$ is in the $i$-th position. For this choice of $\alpha$, after cancelling the common terms corresponding to the one-term decompositions of $\alpha$,  \cref{equation18}  reduces to
\begin{equation*}
	b_{\gamma_1}^{(1)}\Lambda(z_i)b_{\eta_1}^{(2)}\Lambda(z_i)=b_{\gamma_1}^{(1)}b_{\eta_1}^{(2)}\Lambda(z_iz_i).
\end{equation*}
Since $b^{(j)}_\alpha>0$ for all multi-indices $\alpha$ with $|\alpha|=1,$ it follows that $\Lambda(z_i^2)=\Lambda(z_i)^2$. Now suppose there exist distinct indices $i \neq j$ such that $\alpha_i = \alpha_j = 1$. In this case, \cref{equation18} collapses to 
$$\sum _{\delta_1+\delta_2=\alpha}b_{\delta_1}^{(1)} b_{\delta_1}^{(2)}\Lambda(z_j)\Lambda(z_i)=\sum _{\delta_1+\delta_2=\alpha}b_{\delta_2}^{(1)}b_{\delta_2}^{(2)}\Lambda(z_jz_i).$$
Again, by the same reasoning, these sums are non-zero and therefore cancel out, yielding $\Lambda(z_j z_i) = \Lambda(z_j)\Lambda(z_i)$. 
Hence, $\Lambda(\bz^\alpha) = \Lambda(\bz)^\alpha$ for all multi-indices $\alpha$ with $|\alpha| = 2$.

Suppose the claim is valid for all $\alpha$ satisfying $|\alpha|\le k$. It suffices to prove the claim for $|\alpha|=k+1$, after which the conclusion follows by induction. 
To this end, suppose we have a $\alpha$ with $\mod{\alpha}=k+1$. As $|\delta_1+\delta_2|=k+1$ and $\mod{\delta_i}\le k$ as $\delta_i\neq 0$, this implies that $\mod{\gamma_i}\le k$ and $\mod{\eta_j}\le k$. Thus, by applying the induction hypothesis for this fixed multi-index $\alpha$, \cref{equation18} simplifies to

\begin{equation*}
	\begin{aligned}
		&\left[\sum_{\delta_1+\delta_2=\alpha}\left(\sum_{1\le r\le \mod{\delta_1}}\sum_{\gamma_1+\dots+\gamma_r=\delta_1}b_{\gamma_1}^{(1)}\dots b_{\gamma_r}^{(1)}\right)\left(\sum_{1\le r\le \mod{\delta_2}}\sum_{\eta_1+\dots+\eta_q=\delta_2}b_{\eta_1}^{(2)}\dots b_{\eta_q}^{(2)}\right)\right]\Lambda(\bz)^{\alpha}\\
		&=\left[
		\sum_{\delta_1+\delta_2=\alpha}
		\left(\sum_{1\le r\le \mod{\delta_1}}
		\sum_{\substack{\gamma_1+\cdots+\gamma_r = \delta_1}}
		b_{\gamma_1}^{(1)} \cdots b_{\gamma_r}^{(1)}
		\right)
		\left(\sum_{1\le r\le \mod{\delta_2}}
		\sum_{\substack{\eta_1+\cdots+\eta_q = \delta_2}}
		b_{\eta_1}^{(2)} \cdots b_{\eta_q}^{(2)}
		\right)\right]
		\Lambda(\bz^{\,\alpha}).
	\end{aligned}
\end{equation*}
Since each $b^j_\alpha$ is nonnegative for $j=1,2$ and $\alpha$, and $b_\alpha = 1$ whenever $|\alpha|=1$, the sum is always nonnegative and contains at least one term equal to $b_1$. Hence, the sum is strictly positive. Consequently, the same nonzero term appears on both sides, which readily implies that $\Lambda(\bz^{\alpha}) = \Lambda(\bz)^{\alpha}$. This proves the claim. 

For the remaining part, we follow the arguments as in  \eqref{general-entire-space} and conclude that $\Lambda$ is multiplicative on the entire space. This completes the proof. 
\qed

The generalized Bergman kernels 
$$\left(\frac{1}{1 - \langle \bz, \bw \rangle}\right)^m$$ 
for $m \in \mathbb N$ as well as any positive integral power of the Dirichlet kernel provide a prime source of examples.

It is not difficult to verify that the hypotheses of the theorem are satisfied when $s_j=s$ for all $j$. Although this is well known, we include a proof for the convenience of the reader.
\begin{lemma}\label{Lemma_SP}
	Let $s$ be a complete Pick kernel on $\Omega$, and let $
	k$ be the Schur power $k=s^{\circ p},$ where $p\in\mathbb N$. Then, for every $\bw\in \Omega$, the function
	$k_{\bw}^2\in M(\mathcal H(k)).$
\end{lemma}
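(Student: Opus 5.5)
Write $s(\bz,\bw)=1/(1-\langle b(\bz),b(\bw)\rangle)$ as in \cref{theorem_CNPchar}. Then $k_{\bw}^2=s_{\bw}^{2p}$ as functions, so it suffices to show that $s_{\bw}\in M(\mathcal H(k))$ for every $\bw\in\Omega$. Indeed, the multiplier algebra is an algebra, so then $k_{\bw}^2=s_{\bw}^{2p}$ is a product of $2p$ multipliers and lies in $M(\mathcal H(k))$.

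The plan to show $s_{\bw}\in M(\mathcal H(k))$ mirrors the argument given for $2$-point Pick kernels after \cref{theorem_Main_1}. First I show that $\psi_{\bw}(\bz):=\langle b(\bz),b(\bw)\rangle=1-1/s_{\bw}(\bz)$ is a multiplier of $\mathcal H(k)$ with multiplier norm at most $\|b(\bw)\|<1$. Positivity of $s$ forces $\|b(\bw)\|<1$, since $s(\bw,\bw)=(1-\|b(\bw)\|^2)^{-1}$ must be finite and positive. The Neumann series $s_{\bw}=\sum_{n\ge0}\psi_{\bw}^n$ then converges in $M(\mathcal H(k))$, exactly as before.

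The key step is the multiplier bound. The row $b$ is a contractive row multiplier of $\mathcal H(s)$; this is Theorem 8.2 of \cite{AMbook}, and it is the fact underlying \cref{lemma_ker}. Equivalently, the kernel $(1-\langle b(\bz),b(\bw)\rangle)s(\bz,\bw)=1$ is positive. To pass to $k=s^{\circ p}$, I write
\[
\bigl(1-\langle b(\bz),b(\bw)\rangle\bigr)k(\bz,\bw)=s(\bz,\bw)^{p-1}=s^{\circ(p-1)}(\bz,\bw),
\]
which is positive as a Schur power of a positive kernel (when $p=1$ it equals $1$). Hence $b$ is a contractive row multiplier of $\mathcal H(k)$ as well. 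Composing the contractive row multiplier $f\mapsto (b_\alpha\bz^\alpha f)_\alpha$ from $\mathcal H(k)$ to $\mathcal H(k)\otimes\ell^2$ with the constant column $\overline{b(\bw)}$ shows that multiplication by $\psi_{\bw}$ has norm at most $\|b(\bw)\|_{\ell^2}<1$.

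I expect the only real point is this positivity transfer from $s$ to $s^{\circ p}$, and the Schur product theorem handles it. After that, $s_{\bw}=(1-\psi_{\bw})^{-1}\in M(\mathcal H(k))$, and so $k_{\bw}^2=s_{\bw}^{2p}\in M(\mathcal H(k))\subseteq\mathcal H(k)$; the inclusion holds because $\mathcal H(k)$ contains the constants. As a byproduct, the same reasoning also gives $s_{\bw}^{-1}=1-\psi_{\bw}\in M(\mathcal H(k))$, in agreement with \cref{lemma_ker}.
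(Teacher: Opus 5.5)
Your argument follows the paper's proof. You write $s_{\bw}=(1-\psi_{\bw})^{-1}$ with $\psi_{\bw}=\langle b(\cdot),b(\bw)\rangle$, show $\|M_{\psi_{\bw}}\|\le\|b(\bw)\|<1$ on $\mathcal H(k)$, invert by a Neumann series, and use that $M(\mathcal H(k))$ is an algebra. Your identity $(1-\langle b(\bz),b(\bw)\rangle)k(\bz,\bw)=s(\bz,\bw)^{p-1}$ is a real improvement. The paper invokes row-contractivity of $b$, which is a statement about $\mathcal H(s)$, and passes to $M(\mathcal H(k))$ without comment; your Schur-product step is exactly what justifies that passage.

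The final norm estimate, however, is stated with the wrong operator, and as written it fails.

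\begin{itemize}
\item Positivity of $(1-\langle b(\bz),b(\bw)\rangle)k(\bz,\bw)$ means that the \emph{row} $M_b:\mathcal H(k)\otimes\ell^2\to\mathcal H(k)$, $(f_\alpha)_\alpha\mapsto\sum_\alpha b_\alpha\bz^\alpha f_\alpha$, satisfies $M_bM_b^*\le I$.
\item The map you use, $f\mapsto(b_\alpha\bz^\alpha f)_\alpha$ from $\mathcal H(k)$ into $\mathcal H(k)\otimes\ell^2$, is the \emph{column}. Its contractivity does not follow from that positivity, and it does not hold in general.
\item For the Drury--Arveson kernel on $\mathbb B_d$ (with $p=1$ and $b(\bz)=\bz$), the column sends $1$ to $(z_1,\ldots,z_d)$, which has norm $\sqrt d$.
\item For the Dirichlet kernel, it does not even map $1$ into $\mathcal H(k)\otimes\ell^2$, since $\sum_n b_n^2\|z^n\|^2=\sum_n b_n^2(n+1)=\infty$.
\end{itemize}

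The repair is to compose in the other order. Let $J_{\bw}f:=f\otimes\overline{b(\bw)}$, an operator of norm $\|b(\bw)\|$. Then $\psi_{\bw}f=M_b(J_{\bw}f)$, hence $\|M_{\psi_{\bw}}\|\le\|M_b\|\,\|J_{\bw}\|\le\|b(\bw)\|<1$. With this correction your proof is complete.
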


\begin{proof}
	By Theorem~\ref{theorem_CNPchar}, there exist a Hilbert space $\mathcal E$ and a function
	$b$ on $\Omega$ such that
	\[
	s(\bz,\bw)
	=
	\frac{1}{1-\langle b(\bz),b(\bw)\rangle_{\mathcal E}}.
	\]
	By \cite[Theorem~8.2]{AMbook}, $b$ is a contractive row multiplier. Hence, the function
	\[
	\varphi_{\bw}(\bz)
	=
	\langle b(\bz),b(\bw)\rangle_{\mathcal E}, \quad \bz \in \Omega
	\]
	belongs to ${M}(\mathcal H(k))$ for each
	$\bw\in\Omega$, and satisfies
	$
	\|M_{\varphi_{\bw}}\|
	\leq
	\|b(\bw)\|_{\mathcal E}
	<1.
	$
	Thus, $1-\varphi_{\bw}$ is invertible in ${M}(\mathcal H(k))$, and therefore $s_{\bw} \in 	M(\mathcal H(k)).$
	Since $M(\mathcal H(k))$ is an algebra, $k_{\bw}^2 	= 	s_{\bw}^{2p} \in M(\mathcal H(k)).$
\end{proof}

\subsection{Proof of part (2) of \cref{completePick}:} 

Adopting a similar strategy as before, it is straightforward to see that for a fixed $\boldsymbol t$, the functions $
s_{j_{\boldsymbol t}}^{-1}$ , for $j=1, \cdots, p$ are in the multiplier algebra of $\mathcal H(k)$. We only prove the converse implication. Again, for notational convenience, we prove the result only for $p=2$. 

Fix $(\by, \bt)\in\Omega\times\Omega$. Keeping in mind that we are now working with $2d$ variables, namely $x_1, \ldots, x_d$ and $v_1, \ldots, v_d$, it suffices to prove that $\Lambda$ is multiplicative on the monomials. Set 
	$$
	\{ \bx^\alpha \bv^\beta : \alpha, \beta \in \mathbb{Z}_+^d \}.$$
	Once this is established, the remainder of the argument follows as before. To this end, since $s_j$ has the form \eqref{equation:pair1} for $j=1,2$, there exists $\varepsilon>0$ such that
$\bigl|\Lambda(\langle b^{(j)}(\bz),b^{(j)}(\bw)\rangle)\bigr|<1$
	for all $\bw$ in a neighbourhood of the origin, $X(0,\varepsilon)$. Consequently, the geometric series converges absolutely, and hence
	$$
	\Lambda\big(k_{(\by,\bt)}\big)
	=
	\left(
	\sum_{n=0}^{\infty}
	\left(
	\Lambda\!\left[ \langle b^{(1)}(\bx), b^{(1)}(\by)\rangle \right]
	\right)^n
	\right)
	\cdot
	\left(
	\sum_{m=0}^{\infty}
	\left(
	\Lambda\!\left[ \langle b^{(2)}(\bv), b^{(2)}(\bt)\rangle \right]
	\right)^m
	\right).
	$$
	We now simplify the above expression. The right-hand side becomes
	\begin{eqnarray*}
			& & \sum_{n=0}^\infty\left(\Lambda\left[\sum_{\alpha}b^{(1)}_{\alpha}\bx^{\alpha}\overline{\by}^\alpha\right]\right)^n \cdot \sum_{m=0}^\infty\left(\Lambda\left[\sum_{\beta}b_\beta^{(2)}\bv^{\beta}\overline{\bt}^\beta\right]\right)^m \\
			&= & \sum_{n=0}^\infty\left(\sum_{\alpha}b^{(1)}_{\alpha}\Lambda(\bx^{\alpha})\overline{\by}^\alpha\right)^n \cdot \sum_{m=0}^\infty\left(\sum_{\beta}b_\beta^{(2)}\Lambda(\bv^{\beta})\overline{\bt}^\beta\right)^m.
				\end{eqnarray*}
			Using multinomial expansion and collecting like powers, we see that the coefficient of $\overline{\by}^{\alpha}\,\overline{\bt}^{\beta}$ 
	in the expansion of $\Lambda\big(k_{(\by,\bt)}\big)$ is 
	\begin{equation*}
		\sum_{1 \le r \le |\alpha|}
		\sum_{1 \le q \le |\beta|}
		\sum_{\gamma_1+\cdots+\gamma_r=\alpha}
		\sum_{\eta_1+\cdots+\eta_q=\beta}
		b_{\gamma_1}^{(1)} \cdots b_{\gamma_r}^{(1)}\,
		b_{\eta_1}^{(2)} \cdots b_{\eta_q}^{(2)}\,
		\Lambda\!\bigl(\bx^{\gamma_1}\bigr)\cdots
		\Lambda\!\bigl(\bx^{\gamma_r}\bigr)\,
		\Lambda\!\bigl(\bv^{\eta_1}\bigr)\cdots
		\Lambda\!\bigl(\bv^{\eta_q}\bigr).
	\end{equation*}
	On the other hand, since $k_{(\by, \bt)}= s_{1_{\by}}s_{2_{\bt}}$, we get that the coefficient of $\overline{\by}^{\alpha}\,\overline{\bt}^{\beta}$ in the expansion of $\Lambda(k_{(\by, \bt)})$ is 
$$\sum_{\alpha}\sum_{\beta}\Bigg[\left(\sum_{1\le r\le \mod{\alpha}}\sum_{\gamma_1+\dots+\gamma_r=\alpha}b_{\gamma_1}^{(1)}\dots b_{\gamma_r}^{(1)} \Lambda(\bx^{\alpha}) \right)\left(\sum_{1\le r\le \mod{\beta}}\sum_{\eta_1+\dots+\eta_q=\beta}b_{\eta_1}^{(2)}\dots b_{\eta_q}^{(2)} \Lambda(\bv^\beta) \right)\Bigg]. $$
	Equating them we have
	\begin{equation}\label{coefficientcompare}
		\begin{aligned}
			&\sum_{1\le r\le |\alpha|}
			\sum_{1\le q\le |\beta|}
			\sum_{\gamma_1+\cdots+\gamma_r=\alpha}
			\sum_{\eta_1+\cdots+\eta_q=\beta}
			b_{\gamma_1}^{(1)}\cdots b_{\gamma_r}^{(1)}
			b_{\eta_1}^{(2)} \cdots b_{\eta_q}^{(2)} \Lambda\!\bigl(\bx^{\gamma_1}\bigr)\cdots
			\Lambda\!\bigl(\bx^{\gamma_r}\bigr)\,
			\Lambda\!\bigl(\bv^{\eta_1}\bigr)\cdots
			\Lambda\!\bigl(\bv^{\eta_q}\bigr)\\
			&=\sum_{1\le r\le |\alpha|}
			\sum_{1\le q\le |\beta|}
			\sum_{\gamma_1+\cdots+\gamma_r=\alpha}
			\sum_{\eta_1+\cdots+\eta_q=\beta}
			b_{\gamma_1}^{(1)} \cdots b_{\gamma_r}^{(1)}
			b_{\eta_1}^{(2)} \cdots b_{\eta_q}^{(2)} \Lambda\!\bigl(\bx^{\gamma_1}\cdots \bx^{\gamma_r}\bigr)\,
			\Lambda\!\bigl(\bv^{\eta_1}\cdots
			\bv^{\eta_q}\bigr). 
		\end{aligned} 
	\end{equation}
	 Recall that we use the shorthand
	\begin{equation*}
		\Lambda(\bz)^{\alpha} := \prod_{j=1}^d \Lambda(z_j)^{\alpha_j},
	\end{equation*}
	for $\bz = (z_1, \ldots, z_d) \in \Omega$ and $\alpha = (\alpha_1, \ldots, \alpha_d) \in \mathbb{Z}_+^d$. 
	
	We now apply an induction argument on $|\alpha|+|\beta|$, the details of which are omitted, as they are entirely analogous to those in the proof of part~(1). Using \cref{coefficientcompare}, the non-negativity of $b_\alpha^{(1)}$ and $b_\beta^{(2)}$ for all multi-indices $\alpha,\beta$, and their strict positivity when $|\alpha|=1$ and $|\beta|=1$, respectively, we obtain
$$\Lambda(\bx^\alpha \bv^\beta)=\Lambda(\bx)^\alpha\Lambda( \bv)^\beta  \qquad \text{for all multi-indices $\alpha$ and $\beta$}.$$  
The remaining part is now standard. This completes the proof. 
\qed

 	It is worth noting that the characterization of multiplicativity of linear functionals obtained for the Dirichlet space in \cite{mashreghiDirichlet}, as well as for complete Pick spaces in \cite{CHMR}, relies on certain nontrivial factorization results developed in \cite{AP,DP} and \cite{AMMcR} for the Dirichlet space, and in \cite{JM}  for complete Pick spaces. The applicability of the result in \cite{JM} is due to the landmark work of Hartz \cite{Hartz-Acta}, where it is shown that the multiplier algebra of complete Pick kernels satisfies the column–row property. However, our proofs are by elementary methods.

\section{On the spaces of Dirichlet series} \label{Dirichlet_Section}

\subsection{A primer on Dirichlet series} A {\it Dirichlet series} is a series of the form
$$
f(\lambda) = \sum_{n=1}^{\infty} a_{n} n^{-\lambda},
$$
where $a_n \in \mathbb C,~n \ge 1$ and $\lambda$ is a complex variable.
If $a_n = 1$ for all $n \ge 1,$ then we have the Riemann zeta function.
If $f$ is convergent at $\lambda=\lambda_{0}$, then it converges uniformly throughout the angular region
$$
\left\{\lambda \in \mathbb C : |\arg(\lambda-\lambda_0)| < \frac{\pi}{2} -\delta\right\}
$$
for every positive real number $\delta < \frac{\pi}{2}.$ Consequently, $f$ defines a holomorphic function on the right half-plane
$
\{\lambda \in \mathbb C : \operatorname{Re}(\lambda) > \operatorname{Re}(\lambda_0)\}
$
(we refer to \cite[Chapter~IX]{Ti} for the basic theory of Dirichlet series). 

The abscissa of {\it absolute convergence} $\sigma_{a}(f)$ of $f$ is the extended real number defined by
\begin{equation*}
	\sigma_{a}(f)
	=
	\inf\Big\{
	\sigma:
	\sum_{n=1}^{\infty} |a_n| n^{-\sigma}<\infty
	\Big\}.
\end{equation*}
Recall that for a real number $\rho$, the right half-plane
$
\{\lambda \in \mathbb C : \operatorname{Re}(\lambda)>\rho\}
$
is denoted by $\mathbb H_\rho$. The following result asserts that a Dirichlet series is uniquely determined by its values on any right half-plane where it converges (see, \cite[Section~9.6]{Ti}).

\begin{prop}
	For $\rho \in \mathbb R,$ suppose that the Dirichlet series
	$
	f(\lambda)=\sum_{n=1}^\infty a_n n^{-\lambda}
	$
	converges on $\mathbb H_\rho$. If $f=0$ on $\mathbb H_\rho$, then $a_n=0$ for every integer $n\ge1.$
\end{prop}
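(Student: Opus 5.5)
The plan is to reduce to the classical uniqueness theorem for power series via the substitution $x=2^{-\lambda}$. Put $g(\lambda)=f(\lambda+\sigma_0)$ for some real $\sigma_0>\rho$. Then $g$ converges on $\mathbb H_0$ and vanishes there, with coefficients $a_n n^{-\sigma_0}$. So we may assume $\rho=0$ and shift further whenever needed.

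The first step is to move into the region of absolute convergence. Since $f$ converges at some real point $\sigma_1>\rho$, the terms $a_n n^{-\sigma_1}$ are bounded. Hence $\sum |a_n| n^{-\sigma}<\infty$ for every $\sigma>\sigma_1+1$, so $\sigma_a(f)\le \sigma_1+1<\infty$. Fix such a real $\sigma$ and work on the half-plane $\mathbb H_{\sigma}$, where the series converges absolutely and $f\equiv 0$.

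The second step is induction on $n$. Suppose $a_1=\cdots=a_{N-1}=0$, and let $N$ be the least index with $a_N\neq 0$ if one exists. Then
\[
0=N^{\lambda}f(\lambda)=a_N+\sum_{n>N}a_n (N/n)^{\lambda}.
\]
For real $\lambda\to+\infty$ with $\lambda\ge \sigma+1$, each term satisfies
\[
|a_n (N/n)^{\lambda}|\le |a_n| n^{-\sigma} N^{\lambda}\,\bigl(N/(N+1)\bigr)^{\lambda-\sigma}\, n^{\sigma}\cdots.
\]
A cleaner way to bound the tail is
\[
|a_n(N/n)^\lambda| = |a_n| n^{-\sigma} N^{\sigma} (N/n)^{\lambda-\sigma}\le |a_n|n^{-\sigma}N^{\sigma}\bigl(N/(N+1)\bigr)^{\lambda-\sigma}.
\]
This makes the tail at most $C\,(N/(N+1))^{\lambda-\sigma}\to 0$, where $C=N^\sigma\sum|a_n|n^{-\sigma}<\infty$. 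Letting $\lambda\to\infty$ gives $a_N=0$, a contradiction. Therefore all $a_n=0$.

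The main point needing care is the first step: passing from mere convergence on $\mathbb H_\rho$ to absolute convergence somewhere, so that dominated convergence applies as $\lambda\to+\infty$ along the reals. The bounded-terms argument handles this directly. The hypothesis $f=0$ on all of $\mathbb H_\rho$ is used only on a real ray, which is more than enough.
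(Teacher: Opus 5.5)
Your proof is correct, and it is the classical argument: the paper gives no proof of its own and cites Titchmarsh, \S 9.6, where uniqueness is obtained the same way, by isolating the first nonzero coefficient and letting $\operatorname{Re}\lambda\to+\infty$ inside a half-plane of absolute convergence. Only cosmetic cleanup is needed: the announced substitution $x=2^{-\lambda}$ and the shift to $\rho=0$ are never used, and the unfinished first inequality ending in $\cdots$ should be deleted in favour of your ``cleaner'' bound, which is the one that actually works.
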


The next proposition describes the product of two Dirichlet series (see \cite[Theorem~4.3.1 and discussion prior to Theorem~4.3.4]{QQ}).

\begin{prop}
	Let
	$
	f(\lambda)=\sum_{n=1}^\infty a_n n^{-\lambda}~
	\text{and}~
	g(\lambda)=\sum_{n=1}^\infty b_n n^{-\lambda}
	$
	be two convergent Dirichlet series on $\mathbb H_\rho$, for some $\rho\in\mathbb R$. If $g$ converges absolutely on $\mathbb H_\rho,$ then the product is given by
	$$
	f(\lambda)g(\lambda)
	=
	\sum_{n=1}^\infty
	\Big(
	\sum_{d\mid n}
	a_d b_{\frac{n}{d}}
	\Big)
	n^{-\lambda},
	$$
	and it converges on $\mathbb H_\rho.$
\end{prop}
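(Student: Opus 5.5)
The plan is a Mertens-type argument: fix a point and compare partial sums of the formal Dirichlet product with the product of the limits, using absolute convergence of $g$ to control an error term. Fix $\lambda\in\mathbb H_\rho$ and set $A_n=a_n n^{-\lambda}$, $B_n=b_n n^{-\lambda}$. Since $n^{-\lambda}=d^{-\lambda}(n/d)^{-\lambda}$, the $n$-th term of the proposed product series is
\[
C_n:=\Big(\sum_{d\mid n}a_d b_{n/d}\Big)n^{-\lambda}=\sum_{dm=n}A_dB_m .
\]
So it suffices to show that $\sum_{n\le N}C_n\to f(\lambda)g(\lambda)$ as $N\to\infty$. This gives both the convergence of the product series on $\mathbb H_\rho$ and the identification of its sum. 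By hypothesis $\sum A_n$ converges to $f(\lambda)$, possibly only conditionally, and $\sum |B_n|<\infty$ with sum $g(\lambda)$.

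\textbf{Step 1: regroup the partial sums.} Write $F(x)=\sum_{d\le x}A_d$ for real $x\ge 1$, and $F(x)=0$ for $x<1$. Grouping the terms with $dm\le N$ according to $m$ gives
\[
\sum_{n\le N}C_n=\sum_{m\le N}B_m\,F(N/m).
\]
Subtracting $f(\lambda)\sum_{m\le N}B_m$ yields
\[
\sum_{n\le N}C_n-f(\lambda)g(\lambda)=\sum_{m\le N}B_m\bigl(F(N/m)-f(\lambda)\bigr)-f(\lambda)\sum_{m>N}B_m .
\]
The last term tends to $0$ because $\sum B_m$ converges.

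\textbf{Step 2: estimate the main sum.} This is the main step, and it is where absolute convergence of $g$ is essential. Since $F(x)\to f(\lambda)$ as $x\to\infty$, the quantity $M:=\sup_{x}|F(x)-f(\lambda)|$ is finite. Given $\varepsilon>0$, choose $K$ such that $|F(x)-f(\lambda)|<\varepsilon$ for all $x\ge K$. Split the sum over $m\le N$ into two ranges.

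For $m\le N/K$ we have $N/m\ge K$, so these terms contribute at most $\varepsilon\sum_m|B_m|$.

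For $N/K<m\le N$ the contribution is at most $M\sum_{m>N/K}|B_m|$. This tends to $0$ as $N\to\infty$, being the tail of a convergent series.

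Letting $N\to\infty$ and then $\varepsilon\to 0$ shows $\sum_{n\le N}C_n\to f(\lambda)g(\lambda)$.

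\textbf{Step 3: conclude.} Since $\lambda\in\mathbb H_\rho$ was arbitrary, the Dirichlet series with coefficients $\sum_{d\mid n}a_db_{n/d}$ converges at every point of $\mathbb H_\rho$, and its sum there is $f(\lambda)g(\lambda)$. That is exactly the claimed formula. No uniqueness result is needed, because the identity is established pointwise.
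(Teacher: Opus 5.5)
Your proof is correct and complete. It is the classical Mertens-type argument: regroup $\sum_{n\le N}C_n=\sum_{m\le N}B_mF(N/m)$, then split the sum at $m\approx N/K$. For $m\le N/K$ you use $|F(N/m)-f(\lambda)|<\varepsilon$, and for the remaining $m$ you use the uniform bound $M$ together with the tail $\sum_{m>N/K}|B_m|$. The paper gives no proof of its own here and only cites Queff\'elec--Queff\'elec; your argument is the standard one behind that result, so it is essentially the same route.
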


A \emph{diagonal Dirichlet series kernel} is a positive definite kernel of the form
\begin{equation}
	\label{kappa-d}
	k(\lambda,u)
	=
	\sum_{n=1}^\infty
	c_n n^{-\lambda-\overline{u}},
	\qquad
	\lambda,u\in
	\mathbb H_{\rho},
\end{equation}
where $c_n>0$ for every $n$, and
$\rho$ is chosen so that the series converges on $\mathbb H_\rho\times\mathbb H_\rho$
(equivalently, $2\rho$ is bigger than or equal to $\sigma_a(\sum_{n=1}^\infty c_n\, n^{-\lambda})$). 
Since $k$ converges absolutely on
$
\mathbb H_{\rho}
\times
\mathbb H_{\rho},
$
there exists a unique reproducing kernel Hilbert space $\mathcal H(k)$ associated with $k$, given by
$$
\mathcal H(k)
=
\left\{
\sum_{n=1}^\infty a_n n^{-\lambda}
:
\sum_{n=1}^\infty
\frac{|a_n|^2}{c_n}
<
\infty
\right\}.
$$
By the Cauchy-Schwarz inequality, every function in $\mathcal H(k)$ converges absolutely on $\mathbb H_{\rho}$.

\subsection{Proof of \cref{theorem_Main_3}}
 For a Dirichlet series kernel $k$ as in \eqref{kappa-d}, observe that
	\[
	k(\lambda,u)
	=
	(k_u)(\lambda)
	=
	\sum_{n=1}^{\infty}
	\left(c_n n^{-\overline{u}}\right)n^{-\lambda},
	\qquad
	\lambda,u\in\mathbb H_{\rho}.
	\]
	Thus, for every $u\in\mathbb H_{\rho}$, the product formula for Dirichlet series yields
	\begin{equation}
		\begin{aligned}
			k(\lambda,u)^2
			&=
			\left(
			\sum_{n=1}^{\infty}
			\left(c_n n^{-\overline{u}}\right)n^{-\lambda}
			\right)
			\left(
			\sum_{m=1}^{\infty}
			\left(c_m m^{-\overline{u}}\right)m^{-\lambda}
			\right)\\
			&=
			\sum_{n=1}^{\infty}
			\Bigg(
			\sum_{d\mid n}
			c_d d^{-\overline{u}}
			\,c_{n/d}
			\left(\frac{n}{d}\right)^{-\overline{u}}
			\Bigg)
			n^{-\lambda}\\
			&=
			\sum_{n=1}^{\infty}
			\Bigg(
			n^{-\overline{u}}
			\sum_{d\mid n}
			c_d c_{n/d}
			\Bigg)
			n^{-\lambda},
			\qquad
			\lambda\in\mathbb H_{\rho}.
		\end{aligned}
		\label{square-pdt-formula-dirichlet}
	\end{equation}
	Since there exists $t > \rho$ such that $
	k_t^2\in\mathcal H(k),
	$ we have for every $u\in\mathbb H_t$ that
	\begin{equation*}
		\begin{aligned}
			\sum_{n=1}^{\infty}
			\frac{
				\Big(
				\sum\limits_{d\mid n}
				c_d c_{n/d}
				\Big)^2
				n^{-2\operatorname{Re}(u)}
			}
			{c_n}
			\le
			\sum_{n=1}^{\infty}
			\frac{
				\Big(
				\sum\limits_{d\mid n}
				c_d c_{n/d}
				\Big)^2
				n^{-2t}
			}
			{c_n}
			=
			\left\|k_t^2\right\|_{\mathcal H(k)}^2
			<\infty,
		\end{aligned}
	\end{equation*}
	where the equality follows from \eqref{square-pdt-formula-dirichlet}. Therefore, $
	k_u^2
	\in
	\mathcal H(k)
	$ for every $u\in\mathbb H_t$. 
	
	For the remaining equivalence, the forward implication is trivially true. For the converse part, suppose that
	\begin{equation*}
		\Lambda(k_{u}^2) = \Lambda(k_{u})^2~\text{for every $u \in \mathbb H_{t}$}.
	\end{equation*} 
Expanding the square and applying the uniqueness of Dirichlet series yields that
	\[
	\Big(\sum_{d | n} c_d c_{\frac{n}{d}}\Big)\Lambda(n^{-\lambda})
	=
	\sum_{d\mid n}
	c_{d} c_{\frac{n}{d}}\Lambda(d^{-\lambda})
	\Lambda\left(
	\left(
	\frac{n}{d}
	\right)^{-\lambda}
	\right),
	\quad
	n \ge 1.
	\]
	Hence, by using $\Lambda(1) = 1$, we get the recursive formula of the form
	\begin{equation}
		\label{recursive-formula}
		\Lambda(n^{-\lambda})
		=
		\Bigg(\sum_{\substack{j | n \\ j \neq 1, n}} c_j c_{\frac{n}{j}}\Bigg)^{-1}
		\sum_{\substack{d | n \\ d \neq 1, n}}
		c_{d} c_{\frac{n}{d}}
		\Lambda(d^{-\lambda})
		\Lambda\left(
		\left(
		\frac{n}{d}
		\right)^{-\lambda}
		\right)
	\end{equation}
	for all $n \geq 1$ which is not a prime. 
	For obtaining the multiplicativity of $\Lambda$, we first verify that $\Lambda$ is multiplicative on the Dirichlet monomials; i.e.,
	\begin{equation}
		\label{multiplicativity-monomials-product}
		\Lambda\big((mn)^{-\lambda}\big)
		=
		\Lambda(m^{-\lambda})
		\Lambda(n^{-\lambda}),
		\quad
		m,n \ge 1.
	\end{equation}
	To prove this, it suffices to establish that for a fixed $k \in \mathbb N$ and primes $\{p_{m_1}, \ldots, p_{m_k}\}$,  the following holds:
	\begin{equation}
		\label{multiplicative-monomials}
		\noindent \textbf{Claim:} ~~\Lambda(n^{-\lambda}) = \Lambda(p_{m_1}^{-\lambda})^{\alpha_1} \Lambda(p_{m_2}^{-\lambda})^{\alpha_2} \cdots \Lambda(p_{m_k}^{-\lambda})^{\alpha_k} \quad \text{whenever}~ n
		=
		p_{m_1}^{\alpha_1}
		p_{m_2}^{\alpha_2}
		\cdots
		p_{m_k}^{\alpha_k}.
	\end{equation}
	For notational convenience, we write $q_i$ for $p_{m_i}$ for $i=1, \ldots, k$.  We apply the induction argument on $\sum_{i=1}^{k}\alpha_i$. If $\sum_{i=1}^{k}\alpha_i$ is either $0$ or $1$, then there is nothing to prove. If
	$\sum_{i=1}^{k}\alpha_i=2$, then $\alpha = (\alpha_1, \ldots, \alpha_k)$ is either $e_i+e_j$ for some distinct $i, j$, or $2 e_i$ for some $i$. When $\alpha = e_i+e_j$ for some distinct $i, j$, then we need to show that
	$$
	\Lambda((q_iq_j)^{-\lambda})
	=
	\Lambda(q_i^{-\lambda})
	\Lambda(q_j^{-\lambda}).
	$$
	By \eqref{recursive-formula}, note that
	$$
	\Lambda((q_iq_j)^{-\lambda})
	=
	(2c_{q_i}c_{q_j})^{-1}
	\left(
	c_{q_i}c_{q_j}\Lambda(q_i^{-\lambda})\Lambda(q_j^{-\lambda})
	+
	c_{q_j}c_{q_i}\Lambda(q_j^{-\lambda})\Lambda(q_i^{-\lambda})
	\right)
	=
	\Lambda(q_i^{-\lambda})
	\Lambda(q_j^{-\lambda}).
	$$
	In the other case $\alpha = 2e_i$, we have to show that
	$
	\Lambda(q_i^{-2\lambda})
	=
	\Lambda(q_i^{-\lambda})^2.
	$
	Again, by \eqref{recursive-formula}, 
	$$
	\Lambda(q_i^{-2\lambda})
	=
	(c_{q_i}^2)^{-1}
	\left(c_{q_i}^2\Lambda(q_i^{-\lambda})^2\right)
	=
	\Lambda(q_i^{-\lambda})^2.
	$$
	This proves the base case.
	
	 Now, assume for all $(\beta_1,\ldots,\beta_k)\in\mathbb{Z}_+^k$ satisfying
	$
	\sum_{j=1}^{k}\beta_j
	\leq m
	$
	that
	\begin{equation}
		\label{induction-hypothesis}
		\Lambda\left(
		(q_1^{\beta_1}
		q_2^{\beta_2}
		\cdots
		q_k^{\beta_k})^{-\lambda}
		\right)
		=
		\Lambda(q_1^{-\lambda})^{\beta_1}
		\cdots
		\Lambda(q_k^{-\lambda})^{\beta_k}.
	\end{equation}
	Let $\alpha = (\alpha_1,\ldots,\alpha_k)\in\mathbb{Z}_+^k$ be such that
	$
	\sum_{j=1}^{k}\alpha_j
	=
	m+1.
	$
	We sometimes abbreviate $\prod_{j=1}^k q_j^{\gamma_j}$ as $\mathbf{q}^{\gamma}$ for $\gamma = (\gamma_1, \ldots, \gamma_k) \in \mathbb Z_+^k$. Since $|\beta| \leq m$ for any $\beta \leq \alpha$ with $\beta  \neq \alpha$, by the induction hypothesis \eqref{induction-hypothesis},
	\[
	\sum_{\substack{ \beta \le \alpha \\ \beta \neq \mathbf 0, \alpha}}
	c_{\mathbf q^\beta} c_{\mathbf q^{\alpha - \beta}}
	\Lambda\Bigg(\Big(\prod_{j=1}^k q_j^{\beta_j}\Big)^{-\lambda}\Bigg)
	\Lambda\Bigg(\Big(\prod_{j=1}^k q_j^{\alpha_j - \beta_j}\Big)^{-\lambda}\Bigg)
	=
	\prod_{j=1}^k \Lambda(q_j^{-\lambda})^{\alpha_j}
	\sum_{\substack{ \beta \le \alpha \\ \beta \neq \mathbf 0, \alpha}}
	c_{\mathbf q^{\beta}} c_{\mathbf q^{\alpha- \beta}}.
	\]
	Hence, by using \eqref{recursive-formula},
	\begin{eqnarray*}
		\Lambda
		\Bigg(\prod_{j=1}^k q_j^{-\lambda \alpha_j}\Bigg)
		=
		\Lambda(n^{-\lambda})
		&=&
		\Big(\sum_{\substack{j | n  \\ j \neq 1, n}} c_j c_{\frac{n}{j}}\Big)^{-1}
		\sum_{\substack{d | n \\ d \neq 1, n}}
		c_{d} c_{\frac{n}{d}}
		\Lambda(d^{-\lambda})
		\Lambda\left(
		\left(
		\frac{n}{d}
		\right)^{-\lambda}
		\right)\\
		&=&
		\frac{
			\sum\limits_{\substack{ \beta \le \alpha \\ \beta \neq \mathbf 0, \alpha}}
			c_{\mathbf q^\beta} c_{\mathbf q^{\alpha - \beta}}
			\Lambda\left(\Big(\prod\limits_{j=1}^k q_j^{\beta_j}\Big)^{-\lambda}\right)
			\Lambda\left(\Big(\prod\limits_{j=1}^k q_j^{\alpha_j - \beta_j}\Big)^{-\lambda}\right)
		}{
			\sum\limits_{\substack{ \beta \le \alpha \\ \beta \neq \mathbf 0, \alpha}}
			c_{\mathbf q^\beta} c_{\mathbf q^{\alpha - \beta}}
		}\\
		&=&
		\prod_{j=1}^k \Lambda(q_j^{-\lambda})^{\alpha_j}.
	\end{eqnarray*}
	This proves the claim \eqref{multiplicative-monomials}. Consequently, since $k$ and $\{p_{m_1}, \ldots, p_{m_k}\}$ are arbitrary, the condition \eqref{multiplicativity-monomials-product} is also established.
	
	To finish the proof, let
	$
	f = \sum_{n=1}^\infty a_n n^{-\lambda}~
	\text{and}~	g = \sum_{n=1}^\infty b_n n^{-\lambda}$
	be in $\mathcal H(k)$ such that $fg \in \mathcal H(k)$. Then, by the continuity of $\Lambda$, we have
	\begin{eqnarray*}
		\Lambda(f)\Lambda(g)
		& =&
		\Bigg(\sum_{m=1}^\infty a_m \Lambda(m^{-\lambda})\Bigg)
		\Bigg(\sum_{n=1}^\infty b_n \Lambda(n^{-\lambda})\Bigg)\\
		&=&
		\sum_{m,n=1}^\infty
		a_m b_n
		\Lambda(m^{-\lambda})
		\Lambda(n^{-\lambda})\\
		&\overset{\eqref{multiplicativity-monomials-product}}=&
		\sum_{m,n=1}^\infty
		a_m b_n
		\Lambda\big((mn)^{-\lambda}\big)\\
		&=&
		\sum_{n=1}^\infty
		\Big(\sum_{d | n} a_d b_{\frac{n}{d}}\Big)
		\Lambda(n^{-\lambda})\\
		&=&
		\Lambda\Bigg(
		\sum_{n=1}^\infty
		\Big(\sum_{d | n} a_d b_{\frac{n}{d}}\Big)
		n^{-\lambda}
		\Bigg)=
		\Lambda(fg).
	\end{eqnarray*}
	
	This completes the proof.\\

\cref{theorem_Main_3} covers two broad families of spaces of Dirichlet series, given below.

\begin{itemize}
\item [$(i)$] It is well known that, for a normalized, irreducible complete Pick space, the kernel functions belong to the associated multiplier algebra; see, Proposition~2.1 of \cite{CHMR}. Consequently, \cref{theorem_Main_3} applies to Hilbert spaces of Dirichlet series associated with normalized, irreducible complete Pick kernels. More precisely, by \cite[Theorem~26]{McS} (see also \cite[Theorem~3.2]{ADS}), all such kernels are given by
$$
k(\lambda, u) = \frac{1}{1 - \sum_{j=1}^d b_j n_j^{-\lambda-\overline{u}}}, \quad \lambda, u \in \mathbb H_\rho
$$
for some $d \in \mathbb N \cup \{\infty\}$, $(b_j)_{j=1}^d \in (0, \infty)^d,$ and a $d$-tuple $(n_j)_{j=1}^d$ of positive integers greater than $1$ such that $\{\log n_i, \log n_j\}$ is linearly independent over the field of rational numbers for some distinct $i,j$ (the rational independence is needed for irreducibility).  Thus, \cref{theorem_Main_3} applies to $\mathcal H(k)$ associated with such a kernel $k$.
\item [$(ii)$] 	\cref{theorem_Main_3} also applies to a Bergman-type family of Hilbert spaces of
Dirichlet series, including the Hardy space of Dirichlet
series itself. This family was studied in \cite{Mccarthy} and is defined as
follows. Let $\mu$ be a probability measure on $[0,\infty)$ with
$0 \in \operatorname{supp}(\mu)$, and define $\{c_n\}_{n=1}^\infty$ by
$$
\frac{1}{c_n} = \int_0^\infty n^{-2t}\,d\mu(t), \qquad n \ge 1
$$
(the choice $\mu = \delta_0$ recovers the Hardy space). By
\cite[Theorem~1.11]{Mccarthy}, $ M(\mathcal H(k)) =
H^\infty(\mathbb H_0) \cap \mathcal D$ (isometrically). Here, $\mathcal D$ denotes the set of all functions which are representable by a convergent Dirichlet series in some right half-plane. The hypothesis of \cref{theorem_Main_3} is
then supplied by the following lemma, applied with $r = 0$.

\end{itemize}

\begin{lemma}\label{lemma:dirichletseries}
	Let $k(\lambda,u) = \sum_{n=1}^{\infty} c_n\, n^{-\lambda-\overline{u}}$ be a diagonal Dirichlet series kernel on $\mathbb H_\rho$. Suppose there exists $r \in \mathbb R$ such that  
	$H^\infty(\mathbb H_r) \cap \mathcal D \subseteq M(\mathcal H(k))$
	(as a set). Then, 
$k_u^2 \in M(\mathcal H(k))$ for every $u \in \mathbb H_{2\rho -r}$.
\end{lemma}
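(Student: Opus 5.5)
Fix $u\in\mathbb H_{2\rho-r}$. The plan is to show that $k_u$ itself lies in $H^\infty(\mathbb H_r)\cap\mathcal D$. The hypothesis then puts $k_u$ in $M(\mathcal H(k))$, and since the multiplier algebra is an algebra, $k_u^2=k_u\cdot k_u\in M(\mathcal H(k))$. (If one wants to avoid using the algebra property of the intersection, note that $H^\infty(\mathbb H_r)\cap\mathcal D$ is itself closed under products: the product of two bounded functions is bounded, and the product of two absolutely convergent Dirichlet series is again a Dirichlet series by the product proposition. So $k_u^2$ lies in the intersection directly.)

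The key step is the membership $k_u\in H^\infty(\mathbb H_r)\cap\mathcal D$. Write
\[
k_u(\lambda)=\sum_{n\ge1} c_n n^{-\overline u}\, n^{-\lambda},
\]
so that
\[
|k_u(\lambda)|\le \sum_{n\ge 1} c_n\, n^{-\operatorname{Re}u-\operatorname{Re}\lambda}.
\]
The kernel converges absolutely on $\mathbb H_\rho\times\mathbb H_\rho$, which means $\sum_n c_n n^{-\sigma}<\infty$ for every $\sigma>2\rho$. For $\operatorname{Re}\lambda>r$ we have $\operatorname{Re}u+\operatorname{Re}\lambda>(2\rho-r)+r=2\rho$. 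Choose $\sigma_0:=\operatorname{Re}u+r>2\rho$. Each term is then dominated by $c_n n^{-\sigma_0}$, so
\[
\sup_{\lambda\in\mathbb H_r}|k_u(\lambda)|\le\sum_n c_n n^{-\sigma_0}<\infty.
\]
This gives boundedness on $\mathbb H_r$, and the same estimate shows the Dirichlet series converges (absolutely) there, so $k_u\in\mathcal D$ and it is holomorphic on $\mathbb H_r$.

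The main obstacle is bookkeeping about where things are defined. The kernel function $k_u$ is a priori a function on $\mathbb H_\rho$, while membership in $H^\infty(\mathbb H_r)$ refers to the half-plane $\mathbb H_r$, and $r$ may be smaller or larger than $\rho$. The resolution is that $k_u$ is given by a Dirichlet series, which converges on the larger half-plane $\mathbb H_{\min(\rho,\,2\rho-\operatorname{Re}u)}$ and is identified, by uniqueness of Dirichlet series, with a single element of $\mathcal D$. The hypothesis $H^\infty(\mathbb H_r)\cap\mathcal D\subseteq M(\mathcal H(k))$ is a statement about such Dirichlet series, so this identification is legitimate.

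A second subtlety is the use of $\sum_n c_n n^{-\sigma}<\infty$ for $\sigma>2\rho$, which is exactly the stated convergence condition $2\rho\ge\sigma_a(\sum_n c_n n^{-\lambda})$; the strict inequality $\operatorname{Re}u>2\rho-r$ is what makes $\sigma_0>2\rho$. Finally, the squaring step uses only that $M(\mathcal H(k))$ is an algebra, as already used in Lemma~\ref{Lemma_SP}.
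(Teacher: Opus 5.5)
Your central estimate is the same as the paper's. For $\operatorname{Re}\lambda>r$ you bound $|k_u(\lambda)|$ by $\sum_n c_n n^{-(r+\operatorname{Re}u)}<\infty$, conclude $k_u\in H^\infty(\mathbb H_r)\cap\mathcal D\subseteq M(\mathcal H(k))$, and square.

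The gap is in your ``bookkeeping'' paragraph. You say $r$ may be smaller or larger than $\rho$. If $r>\rho$, then $u\in\mathbb H_{2\rho-r}$ could have $\operatorname{Re}u\le\rho$. You try to cover that case by claiming the series of $k_u$ converges on $\mathbb H_{\min(\rho,\,2\rho-\operatorname{Re}u)}$. When $\operatorname{Re}u<\rho$ this claim is unjustified and false in general. Absolute convergence of $\sum_n c_n n^{-\overline u}n^{-\lambda}$ is only guaranteed for $\operatorname{Re}\lambda>2\rho-\operatorname{Re}u$, which is then a proper sub-half-plane of $\mathbb H_\rho$. For instance, with the zeta kernel ($\rho=\tfrac12$), the series diverges whenever $\tfrac12<\operatorname{Re}\lambda\le 1-\operatorname{Re}u$. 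Moreover, for $u\notin\mathbb H_\rho$ the symbol $k_u$ is not a kernel function of $\mathcal H(k)$ at all, since $k$ is only defined on $\mathbb H_\rho\times\mathbb H_\rho$. In that case you would be proving something about an object the lemma does not refer to.

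The missing idea, which is the first line of the paper's proof, is that the hypothesis itself forces $r\le\rho$. Since $1\in\mathcal H(k)$, every multiplier $\varphi=\varphi\cdot 1$ lies in $\mathcal H(k)$, so it is a Dirichlet series converging on $\mathbb H_\rho$. Suppose $r>\rho$ and take $0<\varepsilon<r-\rho$. The series $\sum_n n^{r-1-\varepsilon}n^{-\lambda}$ lies in $\mathcal D$ and is bounded by $\zeta(1+\varepsilon)$ on $\mathbb H_r$. By hypothesis it would be a multiplier, so by uniqueness of Dirichlet coefficients this very series would have to converge on $\mathbb H_\rho$. But its abscissa of convergence is $r-\varepsilon>\rho$, a contradiction.

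Hence $\mathbb H_{2\rho-r}\subseteq\mathbb H_\rho$, so every $u$ in the statement is a point of the domain and $k_u$ is a genuine kernel function. Its series then converges on $\mathbb H_{2\rho-\operatorname{Re}u}\supseteq\mathbb H_\rho$, which is exactly what your identification step needs. Add this observation and drop the remark about $r>\rho$ and the $\min$ formula; your argument then coincides with the paper's.
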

\begin{proof}
By the assumption, any series in $H^\infty(\mathbb H_r) \cap \mathcal D$ converges on $\mathbb H_\rho$, and hence $r \leq \rho$. Now, let $u \in \mathbb H_{2\rho -r} \subseteq \mathbb H_\rho$. Then, it follows that
	$r + \operatorname{Re}(u) > 2\rho$, and hence for every
	$\lambda \in \overline{\mathbb H_r}$,
	$$
	|k_u(\lambda)|
	\le \sum_{n=1}^{\infty} c_n\, n^{-(\operatorname{Re}(\lambda) + \operatorname{Re}(u))}
	\le \sum_{n=1}^{\infty} c_n\, n^{-(r + \operatorname{Re}(u))}	< \infty .
	$$
	Thus, $k_u \in H^\infty(\mathbb H_r) \cap \mathcal D \subseteq 
	M(\mathcal H(k))$. Using the fact that $M(\mathcal H(k))$ is an algebra, we obtain 
	$$
	k_u^2 = k_u\cdot k_u \in M(\mathcal H(k)),
	$$
	as claimed.
\end{proof}

There exist diagonal Dirichlet series kernels for which the conclusion of \cref{lemma:dirichletseries} fails.

\begin{example}
	Let $\{c_n\}_{n = 1}^\infty$ be a sequence defined by
	$$
	c_n
	= \begin{cases}
		\dfrac{1}{p^{8}e^{p}}, & n=p^2 \text{ for prime $p$},\\
		\dfrac{1}{n^2}, & \text{otherwise}.
	\end{cases}
	$$
	Then, it is clear that $\sum_{n=1}^{\infty} \frac{c_n}{n^{\delta}} < \infty$ for all $\delta > -1$, and $\sum_{n=1}^{\infty} nc_n$ is divergent. Hence, $k$ is a diagonal Dirichlet series kernel on $\mathbb H_{-1}$. Since
	$$
	\sum_{d \mid p^2} c_d c_{\frac{p^2}{d}}
	=
	2c_1c_{p^2}+c_p^2
	>
	c_p^2  \quad \text{for every prime $p$},
	$$
	it follows that
	$$
	\frac{\Big(\sum\limits_{d \mid p^2} c_d c_{\frac{p^2}{d}}\Big)^2}{c_{p^2}}
	>
	\frac{c_p^4}{c_{p^2}}
	=
	e^p.
	$$
	Therefore, the series
	\[
	\sum_{n=1}^{\infty}
	\frac{\left(\sum_{d \mid n} c_d c_{\frac{n}{d}}\right)^2}{c_n}
	n^{-2t} \quad \text{diverges},
	\]
 which implies that $
	k_t^2 \notin \mathcal H(k)
	$ for every $t > -1$.
\end{example}

\section{Characterizing weighted composition operators on RKHSs}

In this section, we apply our main results to characterize weighted composition operators on reproducing kernel Hilbert spaces associated with diagonal holomorphic kernels. Composition operators form a classical class of operators on spaces of analytic functions; see Shapiro's monograph~\cite{Shapiro}, and weighted composition operators provide a natural extension of this class. Weighted composition operators on Dirichlet spaces \cite{mashreghiDirichlet} have previously been characterized in terms of linear operators that preserve nowhere-vanishing functions. Building on our characterization of multiplicative linear functionals, we extend this viewpoint to a broader class of reproducing kernel Hilbert spaces, showing that a bounded operator can be characterized as a weighted composition operator solely through its action on certain functions stemming from the reproducing kernels. 

\begin{thm}
	Let $k$ be a diagonal holomorphic kernel on $\Omega$. Assume that $k_{\bw}^2$ is in $\mathcal{H}(k)$ for every $\bw \in \Omega$, and every nonzero multiplicative linear functional on $\mathcal{H}(k)$ is given by a point evaluation at some point of $\Omega$. Let $T \in \mathcal{B}(\mathcal{H}(k))$ and suppose that
	$h := T(1)$ is nowhere zero on $\Omega$. Then $T$ is a weighted
	composition operator if and only if
	\begin{equation}\label{eq:main}
		T\bigl(k_{\bw}^2\bigr)\,T(1)
		=
		\bigl(Tk_{\bw}\bigr)^2,
		\qquad
		\bw \in \Omega.
	\end{equation}
\end{thm}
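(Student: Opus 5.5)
The plan is to reduce everything to \cref{theorem_Main_1} by looking at the functionals $\Lambda_{\bz}(f) := (Tf)(\bz)/h(\bz)$ for fixed $\bz\in\Omega$. Each $\Lambda_{\bz}$ is well defined because $h$ is nowhere zero. It is bounded, being the composition of the bounded operator $T$, the bounded evaluation at $\bz$ on $\mathcal H(k)$, and division by the nonzero scalar $h(\bz)$. It is normalized, since $\Lambda_{\bz}(1) = h(\bz)/h(\bz) = 1$.

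\emph{Necessity.} Suppose $(Tf)(\bz) = h(\bz) f(\varphi(\bz))$. Then
\[
T(k_{\bw}^2)(\bz)\, h(\bz) = h(\bz)^2 k_{\bw}(\varphi(\bz))^2 = \bigl((Tk_{\bw})(\bz)\bigr)^2
\]
pointwise, which is \eqref{eq:main}.

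\emph{Sufficiency.} Assume \eqref{eq:main}. Evaluating at $\bz$ and dividing by $h(\bz)^2$ gives
\[
\Lambda_{\bz}(k_{\bw}^2) = \Lambda_{\bz}(k_{\bw})^2 \quad\text{for every } \bw \in \Omega.
\]
Since $k_{\bw}^2\in\mathcal H(k)$ by hypothesis, \cref{theorem_Main_1} shows that $\Lambda_{\bz}$ is multiplicative. It is nonzero because $\Lambda_{\bz}(1)=1$. By the standing hypothesis on $k$, there is therefore a point, which I call $\varphi(\bz)\in\Omega$, with $\Lambda_{\bz}(f) = f(\varphi(\bz))$ for all $f$. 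This point is unique, since point evaluations at distinct points differ: for instance the coordinate functions $z_j$ lie in $\mathcal H(k)$ because polynomials are in the space, and they separate points. This defines a map $\varphi:\Omega\to\Omega$ satisfying $(Tf)(\bz) = h(\bz) f(\varphi(\bz))$.

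The main remaining obstacle is showing that $\varphi$ is holomorphic. I would argue this directly from the coordinates. Since the coordinate function $z_j$ belongs to $\mathcal H(k)$, the $j$-th coordinate of $\varphi$ is
\[
\varphi_j(\bz) = \Lambda_{\bz}(z_j) = \frac{(Tz_j)(\bz)}{h(\bz)}.
\]
Here $Tz_j\in\mathcal H(k)$ is holomorphic, and $h = T(1)$ is holomorphic and nowhere zero. So each $\varphi_j$ is holomorphic, and hence $\varphi$ is a holomorphic self-map of $\Omega$. Together with $h = T(1)\in\mathcal H(k)$, this shows that $T$ is a weighted composition operator in the sense of the definition.
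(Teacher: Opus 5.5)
Your proof is correct and follows the paper's argument. Your functional $\Lambda_{\bz}(f)=(Tf)(\bz)/h(\bz)$ is the paper's $f\mapsto\langle f,\overline{h(\bw)}^{\,-1}T^{*}k_{\bw}\rangle$, to which the paper likewise applies \cref{theorem_Main_1} and the point-evaluation hypothesis (phrasing the outcome as $T^{*}k_{\bw}=\overline{h(\bw)}\,k_{\varphi(\bw)}$), and it likewise gets holomorphy of $\varphi$ from $z_j\circ\varphi=T(z_j)/h$.
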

\begin{proof} In this proof, we shall use $\circ$ to denote composition of functions. 
Suppose that $Tf = h (f \circ \varphi)$ for some map
	$\varphi:\Omega\to\Omega$. Then, for every $\bw \in\Omega$,
	\[
	T\bigl(k_{\bw}^{2}\bigr)=h \bigl(k_{\bw}^{2}\circ\varphi\bigr),
	\qquad
	Tk_{\bw}=h (k_{\bw}\circ\varphi),
	\qquad
	T(1)=h,
	\]
	pointwise on $\Omega$, and therefore
	\[
	T\bigl(k_{\bw}^{2}\bigr)\,T(1)
	=h^{2}\bigl(k_{\bw}^{2}\circ\varphi\bigr)
	=h^{2}(k_{\bw}\circ\varphi)^{2}
	=\bigl(h (k_{\bw}\circ\varphi)\bigr)^{2}
	=\bigl(Tk_{\bw}\bigr)^{2},
	\]
	which is \eqref{eq:main}.

 In the converse direction, assuming that \eqref{eq:main} holds, and fixing $\bw \in\Omega$, if we set $
	g_{\bw}:=T^{*}k_{\bw}\in\mathcal{H}(k)$, then for every $f\in\mathcal{H}(k)$, the reproducing property gives
	\begin{equation}\label{eq:uw}
		\langle f,g_{\bw}\rangle=\langle f,T^{*}k_{\bw}\rangle=\langle Tf,k_{\bw}\rangle
		=(Tf)(\bw).
	\end{equation}
	Taking $f=k_{\bv}$, $f=k_{\bv}^{2}$ and $f=1$ in \eqref{eq:uw} respectively yield
	\[
	\begin{gathered}
		\langle k_{\bv}, g_{\bw}\rangle=(Tk_{\bv})(\bw),\quad
		\bigl\langle k_{\bv}^{2}, g_{\bw}\bigr\rangle=\bigl(T(k_{\bv}^{2})\bigr)(\bw),\quad
		\langle 1, g_{\bw}\rangle=\bigl(T(1)\bigr)(\bw)=h(\bw),
	\end{gathered}
	\]
	for every $\bv\in\Omega$. Evaluating \eqref{eq:main} at the point $\bw$ therefore
	reads
	\begin{equation}\label{eq:lemhyp}
	\bigl\langle k_{\bv}^{2},g_{\bw}\bigr\rangle\,\langle 1,g_{\bw}\rangle
	=\langle k_{\bv},g_{\bw}\rangle^{2}, \quad \bv\in\Omega.
	\end{equation}
	We shall verify that there exists a unique $\alpha_{\bw} \in\Omega$ such that
	$
	g_{\bw}=\overline{\langle 1,g_{\bw}\rangle}\,k_{\alpha_{\bw}}.
	$

Since $h$ is nowhere zero, we have $\langle 1,g_{\bw}\rangle=h(\bw)\neq 0$, and hence the verification above  is achieved by setting $a=\langle 1,g_{\bw}\rangle$ and $g = \overline{a}^{\,-1}g_{\bw}$. \cref{eq:lemhyp} now takes the form 
	\[
	\bigl\langle k_{\bv}^{2}, g\bigr\rangle=\langle k_{\bv}, g\rangle^{2},
	\qquad \bv\in\Omega.
	\]
	Hence, $\Lambda(f):=\langle f, g\rangle$ defines a bounded linear functional
	on $\mathcal{H}(k)$ with $\Lambda(1)=1$ which satisfies $\Lambda(k_{\bv}^2)=\Lambda(k_{\bv})^2$ for every $\bv \in \Omega$. Therefore, by \cref{theorem_Main_1}, together with the assumption, there is $\alpha_{\bw}\in\Omega$
	with $\Lambda(f)=f(\alpha_{\bw})=\langle f,k_{\alpha_{\bw}}\rangle$ for all $f\in\mathcal{H}(k)$, whence $g=k_{\alpha_{\bw}}$ and
	$g_{\bw}=\overline{\langle 1,g_{\bw}\rangle} \,k_{\alpha_{\bw}}$. If also $g_{\bw}=\overline{\langle 1,g_{\bw}\rangle} \,k_{\mathbf y}$ for some
	$\mathbf{y} \in\Omega$, then $k_{\alpha_{\bw}}=k_{\mathbf{y}}$, so that
	$f(\alpha_{\bw})=\langle f,k_{\alpha_{\bw}}\rangle=\langle f,k_{\mf{y}}\rangle=f(\mf{y})$ for
	every $f\in\mathcal{H}(k)$. Since $\mathcal{H}(k)$ separates the points of $\Omega$, $\alpha_{\bw}=\mf y$, we obtain the uniqueness of $\alpha_{\bw}$.
	Therefore,  
	\[
	T^{*}k_{\bw}=g_{\bw}=\overline{\langle 1,g_{\bw}\rangle}\,k_{\alpha_{\bw}}
	=\overline{h(\bw)}\,k_{\alpha_{\bw}},
	\]
	and hence, by defining $\varphi(\bw):=\alpha_{\bw}$ for $\bw\in\Omega$, we obtain for all $f\in\mathcal{H}(k)$ and $\bw\in\Omega$ that
	\begin{equation} \label{eq:wco}
		(Tf)(\bw)=\langle f,T^{*}k_{\bw}\rangle
	=\bigl\langle f,\overline{h(\bw)}\,k_{\varphi(\bw)}\bigr\rangle
	=h(\bw)\,f\bigl(\varphi(\bw)\bigr).
	\end{equation}
 It remains to prove that $\varphi$ is holomorphic. Since $h=T(1)$ lies in
	$\mathcal{H}(k)$ and is nowhere zero on $\Omega$, the identity in \cref{eq:wco} may
	be rewritten as
	\[
	f\circ\varphi=\frac{Tf}{h}, \qquad f\in\mathcal{H}(k),
	\]
	which exhibits $f\circ\varphi$ as a holomorphic function on $\Omega$ for every
	$f\in\mathcal{H}(k)$. Using the fact that  the coordinate functions $z_j$ are in $\mathcal{H}(k)$, it follows that $z_j \circ \varphi$ is holomorphic on $\Omega$, and hence $\varphi$ is holomorphic on $\Omega$.
	
	That the symbol is unique is easy to see: if $\varphi_{1},\varphi_{2}:\Omega\to\Omega$
	satisfy $f(\varphi_{1}(\bw))=f(\varphi_{2}(\bw))$ for every $f\in\mathcal{H}(k)$ and
	$\bw\in\Omega$, then by the separation property of $\mathcal{H}(k)$, $\varphi_{1}=\varphi_{2}$.

The above proves the first part of \cref{theorem_application_main}. The other parts have similar proofs by using  \cref{completePick}. 
\end{proof}

\section{Sharpness of the Main Results and Point Evaluations}

\subsection{Examples:} \label{examples:MainTheorems}

In the two examples below, we use the fact that every multiplicative linear functional on a complete Pick space  is necessarily bounded; see, \cite{CHMR}.
The first example shows that the boundedness assumption in \cref{theorem_Main_1} and \cref{theorem_Main_3} is essential.

\begin{example}
	Let $k$ be a complete Pick kernel on $\Omega$ and $S:=\{k_{w}, k_{w}^2: w\in \Omega\}$.
Assume that the dimension of the quotient space $\mathcal H(k) /\operatorname{span}(S)$ is infinite. Let $S'$ be a maximal linearly independent subset of $S$, and let $\mathcal B$ be a Hamel basis of $\mathcal H(k)$ obtained by extending $S'$. Since $\mathcal B\setminus S'$ is infinite, we may choose a countably infinite subset $\{g_n: n\in \mathbb{N}\}\subset \mathcal B\setminus S'$. Next, fix $z_0\in \Omega$ and  define $\Lambda: \mathcal B \to \mathbb{C}$  by 
\begin{equation*}
	\Lambda(f) =
	\begin{cases}
		f(z_0), & \text{if } f \in S', \\
		m \|f\|, & \text{if } f = g_m,\\
		0, & \text{otherwise}.
	\end{cases}
\end{equation*}
Extending $\Lambda$ linearly to $\mathcal H(k)$, we obtain a linear functional that satisfies  $\Lambda(f)=f(z_0)$ for all  $f\in\operatorname{span}(S)$.
Consequently,
$$
\Lambda(k_{w}^2)=k_w^2(z_0)
=k_{w}(z_0)^2
=\Lambda(k_{w})^2,
\qquad w\in\Omega.
$$
Nevertheless, $\Lambda$ is not multiplicative, since it is unbounded. 

We now construct concrete kernels satisfying the above assumptions, which will show that the boundedness assumption in \cref{theorem_Main_1} \& \cref{theorem_Main_3} is essential. If $k$ is the Szeg\"{o} kernel on the unit disc, then $\mathcal H(k)=H^2(\mathbb D)$. Moreover, $S\subseteq A(\mathbb D)$, where $A(\mathbb D)$ denotes the disk algebra, and hence $\operatorname{span}(S)\subseteq A(\mathbb D).$ Since $A(\mathbb D)$ has infinite codimension in $H^2(\mathbb D)$, it follows that $\dim\left(H^2(\mathbb D)/\operatorname{span}(S)\right)=\infty.$ So, the construction above applies, and hence, the boundedness assumption in \cref{theorem_Main_1} is essential. 

In a similar spirit, let $k$ be a complete Pick Dirichlet series kernel given by
\[
k(\lambda, u) \;=\; \frac{1}{2-\zeta(\lambda+\overline{u})}
\;=\; \sum_{n=1}^{\infty} c_n\, n^{-\lambda-\overline{u}}, \quad \lambda, u \in \mathbb H_\rho,
\]
where $\rho$ is the unique real number satisfying $\zeta(2\rho) = 2$, and $c_n$ is the total number of ways $n$ can be factored into a product of positive integers bigger than $1$. 
To apply the above general construction, we verify that $\dim\left(\mathcal H(k)/\operatorname{span}(S)\right)=\infty$.
Define
\[
A(\mathbb H_\rho):=\bigl\{f\in\mathcal H(k)\;:\;f\ \text{extends continuously to}\ \overline{\mathbb H_\rho}\bigr\}.
\]
Clearly, $A(\mathbb H_\rho)$ is a vector subspace of $\mathcal H(k)$,
and if $A(\mathbb H_\rho)=\mathcal H(k)$, then for any
sequence $\{\sigma_m\}_{m=1}^\infty \subseteq (\rho, \infty)$ such that $\sigma_m\to\rho$, the map
\[
\Lambda_m:\ f\longmapsto \langle f,k_{\sigma_m}\rangle
\]
is a bounded linear functional on $\mathcal H(k)$, and for each $f \in \mathcal H(k)=A(\mathbb H_\rho)$,
\[
\sup_{m\ge1}\bigl|\Lambda_m(f)\bigr|
=\sup_{m\ge1}\bigl|\langle f,k_{\sigma_m}\rangle\bigr|
=\sup_{m\ge1}\bigl|f(\sigma_m)\bigr|<\infty .
\]
By the principle of uniform boundedness,
$
\sup_{m \geq 1}\|\Lambda_m\|<\infty.
$
On the other hand, since $k$ blows up at $\rho$ and $k(\sigma_m,\sigma_m)$ increases as $\sigma_m\to\rho$, we get
\[
\sup_{m\geq 1}\|\Lambda_m\|^{2}
=\sup_{m\geq 1}\|k_{\sigma_m}\|^{2}
=\lim_{m \to \infty}k(\sigma_m,\sigma_m)
=\infty.
\]
Therefore, $A(\mathbb H_\rho)\subsetneq \mathcal H(k)$. Now, let $f\in\mathcal H(k)$ be such that $f \notin A(\mathbb H_\rho)$ and define 
\[
\varphi_n:=f+n^{-\lambda},\qquad n\ge1.
\]
Since $f \notin A(\mathbb H_\rho)$, we obtain that $\varphi_n\notin A(\mathbb H_\rho)$ for all $n\ge1$. For the linear independence of $\{\varphi_j\}_{j=1}^\infty$, assume for a finite subset $J$ that  $\sum_{j \in J}d_j\varphi_j=0$. This implies that 
\[
\Bigl(\sum_{j \in J}d_j\Bigr)f+\sum_{j \in J}d_j\,j^{-\lambda}=0.
\]
Since $f \notin A(\mathbb H_\rho)$, the identity above forces that  $\sum_{j \in J}d_j=0$, and hence 
$\sum_{j \in J}d_j\,j^{-\lambda}=0.$ Therefore, by the uniqueness of Dirichlet series, $d_j=0$  for all $j \in J.$ Hence, $A(\mathbb H_\rho)$ has infinite codimension in $\mathcal H(k)$, which implies that $\dim\left(\mathcal H(k)/\operatorname{span}(S)\right)=\infty.$ So, the construction above applies and hence, the boundedness assumption in \cref{theorem_Main_3} is essential.
	\end{example}

The next example shows that boundedness is essential in  \cref{completePick}.

\begin{example}
	Consider the subset 
	$$
	S:=\{k_w : w\in \mathbb{D}\}\cup \{z\}
	$$
	of the Hardy space $H^2(\mathbb{D})$. We claim that $S$ is linearly independent. Indeed, suppose that for some $w_1,\dots,w_n\in\mathbb{D}$ and scalars $c_1,\dots,c_n,c$, we have
	$$
	c_1 k_{w_1}+\cdots+c_n k_{w_n}+c z=0.
	$$
	Taking the inner product with an arbitrary $f\in H^2(\mathbb{D})$, and using the reproducing properties
	$
	\langle f,k_w\rangle = f(w), \text { and } 
	\langle f,z\rangle = f'(0),
	$
	we obtain
	$$
	\sum_{j=1}^n \overline{c_j}\, f(w_j) + \overline{c}\, f'(0)=0
	$$
	for every $f\in H^2(\mathbb{D})$. Now choose a function $f$ such that $f(w_j)=0$ for all $j$ and $f'(0)\neq 0$. Substituting this function into the above identity gives $\overline{c}\, f'(0)=0$, and hence $c=0$. The identity then reduces to
	$$
	\sum_{j=1}^n \overline{c_j}\, f(w_j)=0
	$$
	for all $f\in H^2(\mathbb{D})$. Since the set $\{k_{w_j}\}_{j=1}^n$ is linearly independent, it follows that $c_j =0$ for all $j=1, \ldots, n$. Therefore, $S$ is linearly independent.
	
	We may extend $S$ to a Hamel basis $\mathcal{B}$ of $H^2(\mathbb{D})$. Since $S \subseteq A(\mathbb D)$, it follows that $\mathcal{B}\setminus S$ must be infinite. Therefore, we may choose a countably infinite subset
	$$
	\{g_n : n\in \mathbb{N}\} \subset \mathcal{B} \setminus S.
	$$
	Fix $z_0\in \mathbb{D} $ and  define $\Lambda: \mathcal{B} \to \mathbb{C}$ given by 
	\begin{equation*}
		\Lambda(f) =
		\begin{cases}
			f(z_0), & \text{if } f \in S, \\
			m \|f\|, & \text{if } f = g_m,\\
			0, & \text{otherwise}.
		\end{cases}
	\end{equation*}
	We extend $\Lambda$ linearly to $H^2(\mathbb{D})$. Since $\Lambda(k_w)=k_w(z_0)$ for all $w\in \mathbb D$, so  
	\begin{equation*}
		\Lambda\left(\frac{1}{k_w}\right)=\Lambda(1-\overline{w}z)=1-\overline{w}z_0=\frac{1}{k_w(z_0)}=\frac{1}{\Lambda(k_w)}.
	\end{equation*}
	Thus, it satisfies the required condition, but it is not bounded. Hence, $\Lambda$ cannot be multiplicative on $H^2(\mathbb{D})$.
\end{example} 

\subsection{Multiplicative linear functional as a point evaluation} \label{Mult_but_not_point}

There are many known examples of complete Pick spaces in which multiplicative linear functionals are not given by point evaluations; see, for instance, the discussion in Section~6 of \cite{CHMR}, where such an example is constructed on the open unit disc—namely, the local Dirichlet space at a point on the circle.  One can verify that these examples also satisfy the hypotheses of  \cref{completePick}.  

Proposition 9 and the discussion surrounding it in \cite{McS} tell us that, in the situations of \cref{theorem_Main_1} or \cref{completePick} of this paper, a multiplicative linear functional need not be a point evaluation at a point in the domain on which the functional Hilbert space is defined. It is, however, a point evaluation at a point in a larger subset of $\mathbb{C}^d$, the so-called maximal domain; see \cite{CM}. Obtaining the maximal domain in specific cases involves specific methods, see \cite{CHMR}, page 1129 and the references therein. The situation of \cref{theorem_Main_3} is quite different. Here a multiplicative linear functional, which is not an evaluation at a point in the domain, always exists as the following lemma shows. Moreover, the maximal domain is not contained in $\mathbb C$ where the domain is. In the case when the $c_n$ in the diagonal Dirichlet kernel $k(\lambda , u) = \sum_{n=1}^\infty c_n n^{-\lambda - \overline{u}}$ are completely multiplicative, i.e., $c_{mn} = c_m c_n$ for $m,n \ge 1$, the maximal domain is 
 \begin{equation}\label{max}
 \{ (x_j)_{j=1}^\infty : (\sqrt{c_{p_j}} x_j)_{j=1}^\infty \in \mathbb D^\infty \cap \ell^2(\mathbb N)\}.
 \end{equation}
 Here $p_j$ is an enumeration of the primes as usual. Any multiplicative linear functional on $\mathcal H(k)$ for $k$ of the above form is a point evaluation for a point in  the maximal domain \eqref{max}. The calculations are left to the reader. 
 
\begin{lemma} \label{Always_mlf_not_pointev}
Let $k$ be a diagonal Dirichlet series kernel on $\mathbb H_\rho$. Then, there exist multiplicative linear functionals on $\mathcal H(k)$ which are not given by point evaluations. 
	\end{lemma}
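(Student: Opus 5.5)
Our plan is to construct the functionals through the Bohr correspondence: a character on Dirichlet monomials is determined by its values on the primes, and a point $\lambda\in\mathbb H_\rho$ produces only the very special prime values $p_j^{-\lambda}$. Concretely, fix an enumeration $(p_j)$ of the primes and a sequence $x=(x_j)$ of complex numbers. On Dirichlet polynomials define
\[
\Lambda_x\Big(\sum_n a_n n^{-\lambda}\Big)=\sum_n a_n x^{\alpha(n)},\qquad x^{\alpha(n)}:=\prod_j x_j^{\alpha_j}\ \text{ when } n=\prod_j p_j^{\alpha_j}.
\]
Then $\Lambda_x(1)=1$ and $\Lambda_x((mn)^{-\lambda})=\Lambda_x(m^{-\lambda})\Lambda_x(n^{-\lambda})$ automatically.

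For boundedness, the Cauchy--Schwarz inequality with respect to the weights $c_n$ gives $|\Lambda_x(f)|^2\le \|f\|^2\sum_n c_n|x^{\alpha(n)}|^2$. It therefore suffices to choose $x$ with $\sum_n c_n |x^{\alpha(n)}|^2<\infty$.

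We take $x_1=p_1^{-\sigma}$ with real $\sigma>\rho$, and $x_j=0$ for $j\ge2$. Then $x^{\alpha(n)}$ vanishes unless $n$ is a power of $p_1$, and the sum becomes $\sum_k c_{p_1^k}p_1^{-2k\sigma}$. This is dominated by $\sum_n c_n n^{-2\sigma}=k(\sigma,\sigma)<\infty$, so $\Lambda_x$ is bounded. We then extend it by continuity to all of $\mathcal H(k)$, which is possible because Dirichlet polynomials are dense, as the $n^{-\lambda}$ form an orthogonal basis.

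Multiplicativity on products $fg\in\mathcal H(k)$ is exactly the computation at the end of the proof of \cref{theorem_Main_3}. Expanding $\Lambda(f)\Lambda(g)$ termwise, using multiplicativity on monomials, regrouping by Dirichlet convolution, and using continuity gives $\Lambda(fg)$. The regrouping is justified because both termwise sums converge absolutely by the same Cauchy--Schwarz estimate.

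To see that $\Lambda_x$ is not a point evaluation, suppose $\Lambda_x(f)=f(\lambda_0)$ for some $\lambda_0\in\mathbb H_\rho$. Testing on $f=p_2^{-\lambda}$ gives $p_2^{-\lambda_0}=0$, which is impossible. To obtain uncountably many such functionals, we vary $\sigma$ in $(\rho,\infty)$, or vary $x_1$ in a small disc, or choose any sequence with $\sum_j|x_j|$ small relative to the admissible prime values. Distinct $x$ give distinct functionals, since $\Lambda_x(p_j^{-\lambda})=x_j$.

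The main obstacle is the boundedness estimate in general. With the degenerate choice above it reduces to finiteness of $k(\sigma,\sigma)$, and that choice suffices for the lemma. The delicate points to double-check are the density of Dirichlet polynomials in $\mathcal H(k)$ and the justification of the termwise rearrangement in the multiplicativity computation. If rearrangement caused trouble, we would instead verify multiplicativity through the identity $\Lambda_x(f)=f^{*}(x)$, where $f^*$ is the Bohr lift of $f$ evaluated at $x$. For $x$ supported on one coordinate, this reduces to a one-variable power series that converges absolutely at $x_1$.
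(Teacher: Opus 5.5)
Your proof is correct. Its skeleton matches the paper's. Both prescribe a completely multiplicative sequence of values on the monomials $n^{-\lambda}$. Both use the weighted Cauchy--Schwarz bound to see that the resulting functional is $f\mapsto\langle f,h\rangle$ for some $h\in\mathcal H(k)$. Both then reuse the closing computation from the proof of \cref{theorem_Main_3}.

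The real difference is the choice of character. The paper sets $\Lambda_t(n^{-\lambda})=t_n^{-(\rho+2)}$ for a completely multiplicative $t_n\ge n$. This needs the growth bound $c_n\le Dn^{2(\rho+1)}$. As written, its step $n^{2(\rho+1)}/t_n^{2(\rho+2)}\le n^{-2}$ also tacitly uses $\rho\ge -2$. Moreover, its non-point-evaluation step hides a genuine choice: $t_n=n$ (or any power of $n$) just gives back evaluation at a point such as $\rho+2\in\mathbb H_\rho$.

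Your character is supported on the powers of a single prime; under the Bohr lift it corresponds to the point $(p_1^{-\sigma},0,0,\dots)$. Boundedness then reduces to $k(\sigma,\sigma)<\infty$, which holds for every $\rho$ without any growth estimate. The non-point-evaluation check is one line, because no point evaluation annihilates $p_2^{-\lambda}$.

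What the paper's choice buys in exchange is that its characters are strictly positive on every monomial. They therefore show that even characters vanishing on no Dirichlet monomial need not be point evaluations, which is a somewhat stronger phenomenon than your example exhibits.

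Your side remark about general $x$ is vague; it is cleanest stated as follows: if $|x_j|\le p_j^{-\sigma}$ for all $j$, then $\sum_n c_n|x^{\alpha(n)}|^2\le k(\sigma,\sigma)$.
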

\begin{proof}
	Let $k(\lambda , u) = \sum_{n=1}^\infty c_n n^{-\lambda - \overline{u}},~ \lambda, u  \in \mathbb H_\rho$, with $c_n > 0$ for all $n \geq 1$.  Then, there exists $D>0$ such that 
	\begin{equation} \label{seq-bdd}
		c_n \leq D n^{2(\rho +1)}, \quad n \geq 1.
	\end{equation}
	Let $t = \{t_n\}_{n=1}^\infty$ be a completely multiplicative sequence such that $t_n \geq n$ for all $n \geq 1$, and define $b_n: = \frac{c_n}{t_n^{\rho +2}},~ n \geq 1$.  Then, by \eqref{seq-bdd},
	$$
	\sum_{n=1}^\infty \frac{|b_n|^2}{c_n} = \sum_{n=1}^\infty \frac{c_n}{t_n^{2(\rho +2)}} \leq D \sum_{n=1}^\infty \frac{n^{2(\rho + 1)}}{t_n^{2(\rho + 2)}}  \leq  D \sum_{n=1}^\infty \frac{1}{n^2}  < \infty,
	$$
	giving that $h(\lambda) = \sum_{n=1}^\infty b_n n^{-\lambda} \in \mathcal H(k)$. Thus, the functional $\Lambda_t(f): = \langle f, h \rangle$ is bounded on $\mathcal H(k)$. Also, since $\{\frac{b_n}{c_n}\}_{n=1}^\infty$ is completely multiplicative,  $\Lambda_t((mn)^{-\lambda}) = \Lambda(m^{-\lambda}) \Lambda(n^{-\lambda})$ for all $m, n \geq 1$, and hence, by the continuity argument, $\Lambda_t$ is multiplicative.
	
	The functional $\Lambda_t$ is given by a point evaluation if and only if there exists $u \in \mathbb H_{\rho}$ such that $h = k_{u}$; equivalently, by the uniqueness of Dirichlet series,	$ 	b_n = c_n n^{-\overline{u}}, ~ n \geq 1.	$
	Therefore, $\Lambda_t$ is given by a point evaluation if and only if
	\begin{equation*} \label{equation:all-point-ev}
	t_n^{\rho + 2} = n^{\text{Re(u)}},  \quad n \geq 1.
	\end{equation*}
	Clearly, there are uncountably infinitely many choices of the sequence $t_n$ for which no such $u$ will exist. 
\end{proof}

{\footnotesize \noindent \textsf{Acknowledgement}: The authors are thankful to \href{https://sites.google.com/view/abhayjindal/home}{Dr. Abhay Jindal} for helpful discussions.  The first author thanks \href{https://www.fmf.uni-lj.si/en/}{Faculty of Mathematics and Physics, University of Ljubljana} and the \href{https://www.math.iitb.ac.in/}{Department of Mathematics, Indian Institute of Technology, Bombay} for visits which helped this work.  AI has been used to formulate the result on weighted composition operators and to improve writing.}

\end{document}